\documentclass[11pt]{amsart}
\usepackage[usenames]{color}

\usepackage{latexsym}
\usepackage{amsfonts}
\usepackage{amsthm}
\usepackage{graphicx}
\usepackage{amsmath,hyperref}
\usepackage{amssymb}
\usepackage{enumitem}
\usepackage{amsmath, amsfonts, amsthm, float, amscd, latexsym, amssymb, xcolor, tikz, dsfont, cite, inputenc, faktor, mathrsfs, mathtools, hyperref, mathbbol, caption}
\usepackage{bbm}
\usepackage[all,cmtip]{xy}
\input xypic 
\usepackage{tikz}
\usepackage{todonotes}

%
%
%

\newcommand{\Z}{{\mathbb Z}}
\def\xa{(\underline{X},\underline{A})}

 \newtheorem{thm}{Theorem}[section]
 \newtheorem{defn}[thm]{Definition}
 \newtheorem{cor}[thm]{Corollary}
 \newtheorem{lem}[thm]{Lemma}
 
 \theoremstyle{definition}
 \theoremstyle{remark}
 
 \newtheorem{ex}{Example}
 \numberwithin{equation}{section}
 
 \numberwithin{equation}{section}

\begin{document}

\title[On the Cohomology Ring of Real Moment-Angle Complexes]
{On the Cohomology Ring of Real Moment-Angle Complexes }

\author[E.Vidaurre]{Elizabeth Vidaurre}
\address{Molloy College}%
\email{evidaurre@molloy.edu}%
\date{\today}

\begin{abstract}
In this article, we study the cohomology ring of real moment-angle complexes over a simplicial complex $K$. Combinatorial generators for the cohomology can be given in terms of $K$. For $K$ the boundary of an $n$-gon, we give a full description of the multiplicative structure of the cohomology ring in terms of the combinatorial generators. As a consequence, it is evident that these generators do not form a symplectic basis, unlike the case for moment-angle complexes.

\end{abstract}

\maketitle

\section{Introduction}
Fixing a pair of topological spaces $(X,A)$, polyhedral product spaces $Z_K({X},{A})$ give a family of spaces where $K$ is a simplicial complex  (see Definition \ref{func}). Examples include moment-angle complexes, complements of complex
coordinate subspace arrangements, and intersections of quadrics among others. In certain cases, polyhedral products provide geometric realizations of right-angled Artin groups and the Stanley-Reisner ring (see Definition \ref{SRring}).


The real moment-angle complex, $Z_K(D^1,S^0)$, and its complex analog (arising from the pair of spaces, the unit disc $D^2$ and the circle $S^1$) feature in toric topology, as they have been key in showing applications in combinatorics and algebraic geometry, among others \cite{MR3363157}. The cohomology ring of the moment-angle complex is shown to be isomorphic to the Tor-algebra $Tor_{\Z[v_1,\ldots, v_m]}(\Z[K],\Z)$ in \cite{MR2117435}, where $\Z[K]$ is the Stanley-Reisner (or face ring) of $K$ and the indeterminates $v_i$ are of degree two (see Section \ref{pps}). The generators correspond to certain subsets of integers and the product of two generators corresponding to non-disjoint subsets is trivial, forming a symplectic basis. 

On the other hand, the cohomology ring of the real moment-angle complex is not completely understood. The group structure is known to be given by $Tor_{\Z[v_1,\ldots, v_m]}(\Z[K],\Z)$ with indeterminates $v_i$ of degree one. Theorem \ref{thethm} gives the ring structure for real moment-angle complexes over certain simplicial complexes. A  consequence of Theorem \ref{thethm} is that the multiplicative structure does not have the same nice closed form as that of moment-angle complexes. In other words, generators corresponding to non-disjoint subsets do not necessarily have trivial product.

This set of combinatorially defined generators can be identified using Bahri-Bendersky-Cohen-Gitler's Splitting Theorem \cite{MR2673742} and Welker-Ziegler-\u{Z}ivaljevi\'{c}'s wedge lemma \cite{Welker}. In this paper, we consider the case when the simplicial complex $K$ is the boundary of an $n$-gon, and describe the ring structure in terms of the combinatorial generators in Theorem \ref{thethm}. 

In full generality, for a simplicial complex $K$ on $m$ vertices, polyhedral product spaces $Z_K(\underline{X},\underline{A})$ are defined in terms of a collection of pairs of spaces $(\underline{X},\underline{A})=\{X_i,A_i\}_{i=1}^m$. The ring structure for the real moment-angle complex $Z_K(D^1,S^0)$ is particularly useful in that the cohomology ring of the more general polyhedral product $Z_K(\underline{CA},\underline{A})$ when $CA_i$ is the cone on $A_i$, can be described in terms of the ring structure of $H^*(Z_K(D^1,S^0))$ and $H^*(A)$ \cite{BBCG4}. 

Moreover, this problem of understanding the cohomology ring of a real moment-angle complex has connections to studying the topology of intersections of quadrics  and real coordinate subspace arrangements, specifically the case when $K$ is the pentagon is discussed in \cite{MR3073929}. The cohomology of real moment-angle complexes and related spaces has also been studied in \cite{CP2}, in the case of rational coefficients.

In Section \ref{sec:consequences}, we will illustrate the main theorem with some examples.   As a corollary we will see that, even though real moment-angle complexes over an $n$-gon are orientable surfaces, the combinatorial generators do not form a symplectic basis.  



\

\textbf{Acknowledgements.} This work is part of the author's doctoral dissertation at the City University of New York Graduate Center. The author would like to thank Martin Bendersky for his guidance throughout this research.

\section{Polyhedral Product Spaces}\label{pps}
In this section, we will give a brief introduction to polyhedral products, moment-angle complexes, and real moment-angle complexes, with an emphasis on the multiplicative structure of their respective cohomology rings.

Let $[m]=\{1,2,\ldots, m\}$ denote the set of integers from $1$ to $m$. An \emph{abstract simplicial complex}, $K$, on $[m]$ is a subset of the power set of $[m]$, such that:
\begin{enumerate}
\item $\emptyset \in K$.
\item If $ \sigma \in K$ with $\tau \subset \sigma$, then $\tau \in K$.
\end{enumerate}
 An $n$-simplex is the full power set of $[n+1]$ and is denoted $\Delta^{n}$. Associated to an abstract simplicial complex is its \emph{geometric realization}, denoted $\mathcal{K}$ or $|K|$ (also called a geometric simplicial complex). A (geometric) $n$-simplex, $\Delta^n$, is the convex hull of $n+1$ points. 

We do not assume $m$ is minimal, i.e. there may exist $[n] \subsetneq [m]$ such that $K$ is contained in the power set of $[n]$. 

Let $I$ be a subset of $[m]$. The \emph{full subcomplex of $K$ in  $I$} is denoted $K_I$. It is a simplicial complex on the set $I$ and defined $$K_I := \{\sigma \in K| \sigma \subset I\}.$$
It is often called the restriction of $K$ to $I$ in the literature.

 Given an abstract simplicial complex $K$, let $\mathcal{S}_K$ be the category with simplices of $K$ as the objects and inclusions as the morphisms. In particular, for $\sigma, \tau \in ob(\mathcal{S}_K)$, there is a morphism $\sigma \rightarrow \tau$ whenever $\sigma \subset \tau$. Define $\mathcal{CW}$ to be the category of CW-complexes and continuous maps. Define $(\underline{X},\underline{A})$ to be a collection of pairs of CW-complexes $\{(X_i,A_i)\}_{i=1}^m$, where $A_i$ is a subspace of $X_i$ for all $i$.

  \
  
{\begin{defn} \label{func}
Given an abstract simplicial complex $K$ on $[m]$, simplices $\sigma, \tau$ of $K$  and a collection of pairs of CW-complexes  $(\underline{X},\underline{A})$, define a diagram $D:\mathcal{S}_K \rightarrow \mathcal{CW}$ given by 
 $$D(\sigma)= \prod \limits_{i\in [m]}Y_i \qquad \text{  where } \displaystyle Y_i=\begin{cases} X_i & i \in \sigma \\ A_i & i \in [m]\backslash \sigma \end{cases}$$
For a morphism $f:\sigma \rightarrow \tau$, the functor $D$ maps $f$ to $\iota:D(\sigma)\rightarrow D(\tau)$ where $\iota$ is the canonical injection.

The \emph{polyhedral product space}  is defined as $$Z_K(\underline{X},\underline{A}):=\underset{\sigma \in K}{colim } D(\sigma)=\bigcup_{\sigma \in K} D(\sigma) $$ and is topologized as a subspace of $ \displaystyle  \prod_{i \in [m]} X_i$.\end{defn}}
  Notice that it suffices to take the colimit over the maximal simplices of $K$. In fact, simplicial complexes can be defined by their maximal simplices and this description will be used throughout. In the case where $(X_i,A_i)=(X,A)$ for all $i$, we write $Z_K(X,A)$. 
  
  Some examples of polyhedral products are moment-angle complexes $Z_K(D^2,S^1)$, which have the homotopy type of the complement of a complex
coordinate subspace arrangement, and Davis-Januszkiewicz spaces $Z_K(\mathbb{C}P^\infty, *)$, which have the Stanley-Reisner ring as cohomology ring. For a simple example, consider the following.
  \begin{ex} \label{helpful}
  Let $K$ be the boundary of a $2$-simplex with vertices labelled $1, 2, 3$. 
$$\begin{array}{ccl}Z_K(D^1,S^0)
        &=& D(\{1,2\}) \cup D(\{1,3\}) \cup  D(\{2,3\}) \\
        & = & D^1\times D^1 \times S^0 \cup D^1 \times S^0 \times D^1 \cup S^0 \times D^1 \times D^1 \\
        &=& \partial (D^1 \times D^1 \times D^1) \\
 & \cong & S^2 \end{array}$$ 
 
 In general, $Z_{\partial \Delta^{m}}(D^1,S^0) \cong S^m $ (see examples in \cite{MR2673742}).  \end{ex}
 Next we will define the polyhedral smash product, a space analogous to the polyhedral product with the smash product operation in place of the Cartesian product. Define $\mathcal{CW}_*$ to be the category of based CW-complexes and based continuous maps.
 \begin{defn}
Let the CW-pairs $(\underline{X}, \underline{A})$ be pointed. Likewise, define a functor  $\widehat{D}(\sigma):\mathcal{S}_K \rightarrow \mathcal{CW}_*$ by
$$\widehat{D}(\sigma)=\wedge Y_i \qquad \text{ where }\displaystyle Y_i=\begin{cases} X_i & i \in \sigma \\ A_i & i \notin \sigma \end{cases}$$ Then the \emph{polyhedral smash product} is  $$\widehat{Z}_K(\underline{X},\underline{A})=\bigcup \widehat{D}(\sigma)$$
\end{defn} 
For the remainder of the paper, we will assume that $(X_i,A_i)$ is a pair of pointed CW-complexes where $A_i$ is a subspace of $X_i$.

The following theorem of Bahri, Bendersky, Cohen and Gitler (BBCG) gives a stable decomposition of a polyhedral product.
\begin{thm}[Splitting Theorem, \cite{MR2673742}]\label{splitting}
Let $(\underline{X_I},\underline{A_I}) =\{(X_i,A_i)\}_{i\in I}$. 
Then  \begin{displaymath} \Sigma Z_K(\underline{X},\underline{A}) \simeq \Sigma \bigvee_{I\subset [m]} \widehat{Z} _{K_I}(\underline{X_I},\underline{A_I})) \end{displaymath}
where  $\Sigma$ denotes the reduced suspension.
\end{thm}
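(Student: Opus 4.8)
The plan is to bootstrap from the classical natural stable splitting of a finite Cartesian product of pointed spaces. Recall that for pointed CW-complexes $W_1,\dots,W_m$ there is a homotopy equivalence
\[
\Sigma\Bigl(\prod_{i\in[m]}W_i\Bigr)\;\simeq\;\Sigma\bigvee_{\emptyset\neq I\subseteq[m]}\widehat{W}_I,\qquad \widehat{W}_I:=\bigwedge_{i\in I}W_i,
\]
realized by an explicit map: apply the $(2^{m}-1)$-fold pinch map of the co-$H$-space $\Sigma(\prod_i W_i)$ and, onto the wedge summand indexed by $I$, map by the suspension of the collapse $q_I\colon\prod_i W_i\to\widehat{W}_I$. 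This map is strictly natural with respect to products of based maps $\prod_i f_i$, and it is a homotopy equivalence by induction on $m$ starting from the elementary case $m=2$.

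First I would observe that the diagram $D\colon\mathcal{S}_K\to\mathcal{CW}_*$ is the objectwise product $D=\prod_{i\in[m]}Y_i$, where $Y_i\colon\mathcal{S}_K\to\mathcal{CW}_*$ sends $\sigma$ to $X_i$ if $i\in\sigma$ and to $A_i$ otherwise, and sends an inclusion $\sigma\subseteq\tau$ to the identity of $X_i$, the identity of $A_i$, or the inclusion $A_i\hookrightarrow X_i$. Since every structure map of $D$ is a product of based maps, the naturality above promotes the product splitting to a strict natural transformation of functors $\mathcal{S}_K\to\mathcal{CW}_*$,
\[
\Sigma D\;\xrightarrow{\ \simeq\ }\;\Sigma\bigvee_{\emptyset\neq I\subseteq[m]}\widehat{D}_I,\qquad \widehat{D}_I(\sigma):=\bigwedge_{i\in I}Y_i(\sigma),
\]
which is an objectwise homotopy equivalence.

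Next I would pass to homotopy colimits over $\mathcal{S}_K$. Reduced suspension is a left adjoint, so it commutes with colimits and wedges; and because $Z_K(\underline{X},\underline{A})=\operatorname{colim}_{\sigma\in\mathcal{S}_K}D(\sigma)$ is built from the $D(\sigma)$ by gluing along subcomplex inclusions indexed by the face poset, this colimit is also a homotopy colimit, and likewise $\widehat{Z}_{K_I}(\underline{X_I},\underline{A_I})=\operatorname{colim}_{\mathcal{S}_{K_I}}\widehat{D}^{\,K_I}$, where $\widehat{D}^{\,K_I}$ is the polyhedral smash-product diagram for $K_I$. Applying $\operatorname*{hocolim}_{\mathcal{S}_K}$ to the displayed natural equivalence, and commuting it past $\Sigma$ and $\bigvee$, gives
\[
\Sigma Z_K(\underline{X},\underline{A})\;\simeq\;\Sigma\bigvee_{\emptyset\neq I\subseteq[m]}\operatorname*{hocolim}_{\sigma\in\mathcal{S}_K}\widehat{D}_I(\sigma).
\]
To identify the $I$-th summand, note that for $i\in I$ one has $Y_i(\sigma)=Y_i(\sigma\cap I)$, so $\widehat{D}_I=\widehat{D}^{\,K_I}\circ(-\cap I)$ with $(-\cap I)\colon\mathcal{S}_K\to\mathcal{S}_{K_I}$ (here $\sigma\cap I\in K_I$ since $\sigma\cap I\subseteq\sigma\in K$ and $\sigma\cap I\subseteq I$). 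This functor is homotopy cofinal: for $\tau\in\mathcal{S}_{K_I}$ the relevant comma category is the poset $\{\sigma\in K:\tau\subseteq\sigma\}$, which has $\tau$ as an initial object and hence contractible nerve. Therefore $\operatorname*{hocolim}_{\mathcal{S}_K}\widehat{D}_I\simeq\operatorname*{hocolim}_{\mathcal{S}_{K_I}}\widehat{D}^{\,K_I}\simeq\widehat{Z}_{K_I}(\underline{X_I},\underline{A_I})$, and substituting gives the theorem, with the empty $I$ contributing only a point (consistent with writing the wedge over all $I\subseteq[m]$).

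The step I expect to be the main obstacle is the passage from the colimit defining $Z_K$ to a genuine homotopy colimit — i.e. verifying that the gluings producing the polyhedral product are homotopically well behaved (the maps $D(\sigma)\hookrightarrow D(\tau)$ are cofibrations and the face poset indexes the colimit cofibrantly) so that the objectwise equivalence $\Sigma D\simeq\Sigma\bigvee_I\widehat{D}_I$ descends to an equivalence of total spaces. A more hands-on alternative, avoiding explicit homotopy-colimit machinery, is to write down the assembled map $\Sigma Z_K(\underline{X},\underline{A})\to\bigvee_I\Sigma\widehat{Z}_{K_I}(\underline{X_I},\underline{A_I})$ directly, using for each $I$ the projection $Z_K(\underline{X},\underline{A})\to Z_{K_I}(\underline{X_I},\underline{A_I})$ followed by the collapse onto $\widehat{Z}_{K_I}(\underline{X_I},\underline{A_I})$ and then pinching, and to prove it is an equivalence by induction on the number of simplices of $K$ via a Mayer--Vietoris argument whose inductive step again rests on the product splitting. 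Either way the product splitting is the engine, and the care is in the bookkeeping over $\mathcal{S}_K$.
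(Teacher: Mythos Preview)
The paper does not contain a proof of this statement: Theorem~\ref{splitting} is quoted from \cite{MR2673742} and used as a black box, so there is no ``paper's own proof'' to compare against. Your outline is in fact essentially the argument given in the original Bahri--Bendersky--Cohen--Gitler paper: start from the natural stable splitting of a product of pointed spaces, promote it to a natural transformation of diagrams over the face poset, take (homotopy) colimits, and then use cofinality of the restriction functor $(-\cap I)\colon\mathcal{S}_K\to\mathcal{S}_{K_I}$ to identify each wedge summand with $\widehat{Z}_{K_I}(\underline{X_I},\underline{A_I})$. The issue you flag---that the strict colimit defining $Z_K$ agrees with the homotopy colimit because the structure maps $D(\sigma)\hookrightarrow D(\tau)$ are closed cofibrations---is exactly the point that requires care, and it is handled in \cite{MR2673742}; your alternative inductive Mayer--Vietoris approach is also viable and appears in related treatments. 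In short, your proposal is correct and matches the source, but there is nothing in the present paper to benchmark it against.
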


 In \cite{MR2673742}, the authors apply the wedge lemma from \cite{Welker} to polyhedral smash products and obtain the following:
\begin{thm}[Wedge Lemma, \cite{Welker}]\label{wedge}
 If $X_i$ is contractible for all $i$, then $$\widehat{Z}_K(\underline{X},\underline{A}) \simeq \Sigma |K| \wedge A^{\wedge [m]} \simeq |K| \ast A^{\wedge [m]}$$
 where $A^{\wedge [m]}=A_1 \wedge \ldots \wedge A_m$
\end{thm}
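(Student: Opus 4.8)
The plan is to peel off a smash factor of $A^{\wedge[m]}$, reducing the statement to the special case of the pair $(D^1,S^0)$, and then to settle that case directly.

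First I would replace the contractible spaces $X_i$ by reduced cones. Since $A_i\hookrightarrow X_i$ is a cofibration of CW-complexes and $X_i$ is contractible, the inclusion is null-homotopic, and a null-homotopy extends it to a map of pairs $(CA_i,A_i)\to(X_i,A_i)$ restricting to the identity on $A_i$; being a map between contractible total spaces it is a homotopy equivalence, and because the inclusions are cofibrations it is a homotopy equivalence of pairs. As $\widehat{Z}_K(-,-)$ is a homotopy functor of the defining pairs (its homotopy-colimit model sends objectwise pair-equivalences to equivalences), we may assume $X_i=CA_i$.

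Next I would compute with cones. Writing $CA_i=A_i\wedge I$ with $I$ an interval based at an endpoint, each $\widehat{D}(\sigma)=\bigwedge_{i\in\sigma}CA_i\wedge\bigwedge_{i\notin\sigma}A_i$ is canonically homeomorphic to $A^{\wedge[m]}\wedge I^{\wedge\sigma}$, naturally in $\sigma$; and $I^{\wedge\sigma}$ is precisely the value at $\sigma$ of the diagram for the pair $(D^1,S^0)$ (using $D^1\cong I$ and $X\wedge S^0=X$). Since $A^{\wedge[m]}\wedge(-)$ preserves colimits, this gives $\widehat{Z}_K(\underline{CA},\underline{A})\cong A^{\wedge[m]}\wedge\widehat{Z}_K(D^1,S^0)$. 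Hence it suffices to know the statement for $(\underline{X},\underline{A})=(\underline{D^1},\underline{S^0})$: granting $\widehat{Z}_K(D^1,S^0)\simeq\Sigma|K|$, we get $\widehat{Z}_K(\underline{X},\underline{A})\simeq A^{\wedge[m]}\wedge\Sigma|K|=\Sigma|K|\wedge A^{\wedge[m]}$, and the natural equivalence $P\ast Q\simeq\Sigma(P\wedge Q)\cong\Sigma P\wedge Q$ for CW-complexes identifies this with $|K|\ast A^{\wedge[m]}$, giving both equivalences in the statement.

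It remains to prove $\widehat{Z}_K(D^1,S^0)\simeq\Sigma|K|$, the case $A_i=S^0$. This is elementary: $\widehat{Z}_K(D^1,S^0)=\bigcup_{\sigma\in K}I^{\wedge\sigma}$ is a regular CW-complex which, up to homotopy, is the unreduced suspension of $|K|$, the two cone points being the images of the all-ones point and of the basepoint; one can also argue by induction on the simplices of $K$ through a Mayer--Vietoris decomposition, with Example \ref{helpful} as base input. I expect this to be the only real obstacle: turning the ``combinatorial suspension'' picture into a rigorous homotopy equivalence requires matching CW structures over the subdivision and tracking the basepoint (and disposing of the degenerate cases in which $K$ is a simplex or has no edges). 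The reduction steps --- the pair-equivalence $(X_i,A_i)\simeq(CA_i,A_i)$, the homotopy-functoriality of $\widehat{Z}_K$, and the commutation of smashing with colimits --- are all routine.
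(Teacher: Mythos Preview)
The paper does not supply its own proof of this statement; it is quoted as a consequence of the Welker--Ziegler--\v{Z}ivaljevi\'{c} wedge lemma, via its application to polyhedral smash products in \cite{MR2673742}. So there is no line-by-line comparison to make, but your route is genuinely different from the cited one. The WZZ wedge lemma is a general result about homotopy colimits of diagrams over a poset in which the values above the minimum are contractible (equivalently, the structure maps are null): applied directly to $\sigma\mapsto\widehat D(\sigma)$ over the face poset of $K$ it yields the join description $|K|\ast A^{\wedge[m]}$ in one stroke, with no preliminary reduction to a special pair. You instead peel off the factor $A^{\wedge[m]}$ by hand via $CA_i\cong A_i\wedge I$ and the fact that $A^{\wedge[m]}\wedge(-)$ commutes with the colimit; this reduction is clean and correct, and it isolates the entire content of the theorem in the single identity $\widehat Z_K(D^1,S^0)\simeq\Sigma|K|$.

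That identity is where the work lives, and your sketch of it is not yet a proof. The ``two cone points'' picture (the all-ones vertex and the collapsed basepoint) is exactly the right intuition and can be made rigorous, but the sentence ``regular CW-complex which, up to homotopy, is the unreduced suspension of $|K|$'' is an assertion, not an argument. The alternative you offer, a Mayer--Vietoris induction with Example~\ref{helpful} as base case, does not close the gap either: Mayer--Vietoris controls (co)homology rather than homotopy type, and Example~\ref{helpful} concerns $\partial\Delta^m$, which is not a building block for an induction over arbitrary $K$. In short, your reduction is a pleasant and more elementary reorganization of the statement, but completing the last step rigorously amounts to reproving, by hand, the special case of the WZZ lemma for the diagram of cubes $\sigma\mapsto I^{\wedge\sigma}$; one honest way to finish is to exhibit an explicit homeomorphism between $\bigcup_{\sigma\in K}F_\sigma/\!\sim$ (the faces of $[0,1]^m$ through $\mathbf{1}$ indexed by $K$, with the zero-faces collapsed) and the double mapping cone on $|K|$.
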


Since $S^0$ serves as an identity for the smash product operation, computing the cohomology groups of real moment-angle complexes becomes a combinatorial process that involves examining only the simplicial complex.   This follows from the previous two theorems.
\begin{equation}\label{realmac} H^*(Z_K(D^1,S^0))= \bigoplus\limits_{I\subset [m]} H^*(\Sigma \mathcal{K_I}) 
\end{equation}
The generators of the cohomology ring are given by the subsets of $[m]$ that yield a noncontractible full subcomplex of $K$ after suspension, which we call the \emph{combinatorial generators}.

To describe the ring structure of the cohomology of the moment-angle complex, we will  introduce some notation. The graded ring  $\mathbb{Z}[m]$ is the polynomial ring on $m$ variables $\mathbb{Z}[v_1,v_2,\ldots,v_m]$        with   $\deg(v_i)=2$.
\begin{defn}\label{SRring} The \emph{Stanley-Reisner} ring (or face ring) of the simplicial complex $K$ is the quotient of $\mathbb{Z}[m]$ by the ideal generated by square-free monomials associated to nonfaces of $K$
     $$\mathbb{Z}[K] :=  \mathbb{Z}[m] / \langle v_{i_1} v_{i_2} \ldots v_{i_k} \: | \: \{i_1,i_2,\ldots,i_k\}\notin K \rangle $$   
     \end{defn}
The following was first proved by Franz in \cite{Franz} and stated in terms of smooth toric varieties. Another proof was later given by Baskakov, Buchstaber, Panov in \cite{MR2117435}.
    \begin{thm}[Franz, \cite{Franz}]\label{BBP} The cohomology ring of the moment-angle complex $Z_K(D^2,S^1)$ is given by
    \begin{align*}
        H^*(Z_K(D^2,S^1)) \cong\text{Tor}_{\mathbb{Z}[m]}(\mathbb{Z}[K],\mathbb{Z})
    \end{align*}
    \end{thm}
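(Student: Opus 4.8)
The plan is to realize $Z_K(D^2,S^1)$ as the homotopy fibre of a fibration over the Davis--Januszkiewicz space $DJ(K) := Z_K(\mathbb{C}P^\infty,*)$, use the Eilenberg--Moore spectral sequence to see that $\text{Tor}_{\mathbb{Z}[m]}(\mathbb{Z}[K],\mathbb{Z})$ is the correct additive answer, and then pin down the ring structure by working with cochain algebras directly. First I would recall that $Z_K(D^2,S^1)$ carries a natural $T^m$-action, one circle rotating each disc factor. Since $D^2$ is $S^1$-equivariantly contractible while $S^1$ acts freely on itself, the Borel construction of the $S^1$-pair $(D^2,S^1)$ is homotopy equivalent to $(\mathbb{C}P^\infty,*)$; as $ET^m\times_{T^m}(-)$ preserves colimits and $ET^m=\prod ES^1$ acts coordinatewise, this propagates through the colimit defining the polyhedral product to give $ET^m\times_{T^m}Z_K(D^2,S^1)\simeq Z_K(\mathbb{C}P^\infty,*)=DJ(K)$. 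The Borel fibration then reads
$$Z_K(D^2,S^1)\hookrightarrow DJ(K)\xrightarrow{\ p\ }BT^m .$$

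The base $BT^m$ is simply connected with polynomial cohomology of finite type, so the Eilenberg--Moore spectral sequence of this fibration is a spectral sequence of algebras converging to $H^*(Z_K(D^2,S^1);\mathbb{Z})$ with
$$E_2^{*,*}\cong\text{Tor}_{H^*(BT^m;\mathbb{Z})}\big(H^*(DJ(K);\mathbb{Z}),\,\mathbb{Z}\big) .$$
Here $H^*(BT^m;\mathbb{Z})=\mathbb{Z}[v_1,\dots,v_m]=\mathbb{Z}[m]$ with $\deg v_i = 2$, while $H^*(DJ(K);\mathbb{Z})=\mathbb{Z}[K]$ (the defining property of the Davis--Januszkiewicz space, recalled above), the map $p^*$ being the canonical surjection $\mathbb{Z}[m]\twoheadrightarrow\mathbb{Z}[K]$. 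Thus $E_2\cong\text{Tor}_{\mathbb{Z}[m]}(\mathbb{Z}[K],\mathbb{Z})$ as bigraded algebras, and computing the right side from the Koszul resolution $\big(\Lambda[u_1,\dots,u_m]\otimes\mathbb{Z}[m],\ du_i=v_i\big)$ of $\mathbb{Z}$ over $\mathbb{Z}[m]$ exhibits it as the cohomology of the differential graded algebra $R^*(K):=\big(\Lambda[u_1,\dots,u_m]\otimes\mathbb{Z}[K],\ du_i=v_i\big)$ with $\deg u_i=1$, $\deg v_i=2$.

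The main obstacle is to show the spectral sequence collapses at $E_2$ and that the associated filtration on $H^*(Z_K(D^2,S^1))$ carries no multiplicative extensions, so that the identification with $\text{Tor}_{\mathbb{Z}[m]}(\mathbb{Z}[K],\mathbb{Z})$ is one of rings. Collapse is not forced by a naive degree count: the internal polynomial grading, and even the finer $\mathbb{Z}^m$-multigrading, only rules out the even differentials. One can push this through with a formality argument over $\mathbb{Q}$ together with integral torsion bookkeeping, but I would instead bypass the spectral sequence altogether---this is in effect the Baskakov--Buchstaber--Panov route---and argue with cochain algebras directly.

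Concretely, I would give $Z_K(D^2,S^1)$ its standard CW-structure, in which each disc factor has one $0$-, one $1$- and one $2$-cell and a cell of $Z_K(D^2,S^1)$ is recorded by a pair $(\sigma,\omega)$ with $\sigma\in K$ and $\omega\subseteq[m]\setminus\sigma$ marking the factors that carry the $2$-cell, respectively the $1$-cell. I would then construct an explicit quasi-isomorphism of differential graded algebras between the cellular cochain algebra of $Z_K(D^2,S^1)$ and $R^*(K)$, sending $u_i$ to the cochain dual to the $1$-cell in the $i$-th factor and $v_i$ to the cochain dual to the $2$-cell there. Since $\Lambda[u_1,\dots,u_m]\otimes\mathbb{Z}[m]$ is the Koszul \emph{algebra} resolution of $\mathbb{Z}$, one has $H^*(R^*(K))\cong\text{Tor}_{\mathbb{Z}[m]}(\mathbb{Z}[K],\mathbb{Z})$ as algebras, and the quasi-isomorphism transports this to $H^*(Z_K(D^2,S^1))$, yielding the theorem. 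The genuinely laborious point---and the one I expect to be the crux---is the cell-by-cell check that this comparison is multiplicative: that for a suitable cellular diagonal approximation on $\prod_i D^2$ the cup product on cellular cochains of $Z_K(D^2,S^1)$ agrees with the product of $R^*(K)$ up to the prescribed chain homotopy.
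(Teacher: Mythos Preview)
The paper does not give its own proof of this theorem: it is stated as a cited result, first proved by Franz and later by Baskakov--Buchstaber--Panov, with no argument supplied beyond the references. So there is no ``paper's proof'' to compare against.

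That said, your outline is correct and is essentially the Baskakov--Buchstaber--Panov argument the paper points to: set up the Borel fibration $Z_K(D^2,S^1)\to DJ(K)\to BT^m$, identify $E_2$ of the Eilenberg--Moore spectral sequence with $\mathrm{Tor}_{\mathbb{Z}[m]}(\mathbb{Z}[K],\mathbb{Z})$ via the Koszul resolution, and then bypass collapse and extension issues by constructing a direct quasi-isomorphism of dga's between the cellular cochains on $Z_K(D^2,S^1)$ and $R^*(K)=(\Lambda[u_i]\otimes\mathbb{Z}[K],\,du_i=v_i)$. Your identification of the crux---verifying multiplicativity of the cellular comparison for a suitable diagonal approximation---is exactly where the work lies in that reference.
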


A description of the multiplicative structure in terms of full subcomplexes comes from Hochster's theorem in commutative algebra on the $Tor$-module \cite{hochster}. We obtain the following analogous formula
$$H^k(Z_K(D^2,S^1)) \cong \bigoplus_{J\subset [m]} \widetilde{H}^{k-|J|-1}(K_J)$$
%
%
%
%
For the multiplicative structure, take classes $\alpha \in {H}^{i}(Z_K(D^2,S^1))$ and $\beta \in \widetilde{H}^{k}(Z_K(D^2,S^1))$. Then $\alpha$ corresponds to some class in $\widetilde{H}^{i-|J|-1}(K_J)$ for some subset  $J\subset[m]$ , and similarly $\beta$ to some class in $\widetilde{H}^{k-|L|-1}(K_L)$ for some $L\subset [m]$. Their product is
\[ \alpha \smile \beta =\begin{cases}
\gamma  & \text{ if } J \cap L=\emptyset  \\
0 &  \text{ if } J \cap L\neq \emptyset
\end{cases}
\]
for some  $\gamma$ coming from  $ \widetilde{H}^{i+k-|L|-|J|-1}(K_{J \cup L})$. See \cite{panov} for more details.

\subsection{The BBCG spectral sequence}\label{section:generalproduct}
We will use a spectral sequence developed by BBCG \cite{BBCG4}. It gives a K{\"u}nneth-like formula for the cohomology of a polyhedral product as long as the pairs $(\underline{X},\underline{A})$ satisfy the following freeness condition.

\begin{defn} \label{free}
Given the pair $(X_i,A_i)$, the associated long exact sequence is given by
$$  \ldots \overset{\delta}{\rightarrow} \widetilde{H}^*(X_i/A_i) \overset{g}{\rightarrow} H^*(X_i) \overset{f}{\rightarrow} H^*(A_i) \overset{\delta}{\rightarrow} \widetilde{H}^{\ast+1}(X_i/A_i) \overset{g}{\rightarrow} \ldots $$
The pair is said to satisfy the \emph{strong $h^*$ freeness} condition if there are free $h^*-$modules $E_i, B_i, C_i$ and $W_i$ satisfying
$$\begin{array}{ccccc}
H^*(A_{i})&=&B_{i}\oplus E_{i}\\
H^*(X_{i})&=&B_{i}\oplus C_{i}\\
\widetilde{H}^*(X_i/A_i)&=& C_i \oplus W_i\\
 \end{array}$$
where $W_i$ is $sE_i$, the suspension of $E_i$. Additionally, assume $1\in B_i$, and for $b\in B_i, c \in C_i, e \in E_i, w \in W_i=sE_i$, we have 
\begin{center} $b \overset{f}{\mapsto} b \overset{\delta}{\mapsto} 0, \qquad c \overset{g}{\mapsto} c \overset{f}{\mapsto} 0, \qquad e \overset{\delta}{\mapsto} w \overset{g}{\mapsto} 0$. \end{center} 
\end{defn}
Before defining the spectral sequence, we will give some notation and recall the definition of a half smash product:
\begin{enumerate}
\item for $\sigma =\{i_1,\ldots, i_k\}$, define $\widehat{X}^{ \sigma} := X_{i_1} \wedge \ldots \wedge X_{i_k}$  and ${A}^{\sigma}=A_{i_1} \times \ldots \times A_{i_k}$
\item the complement of a set $\sigma \subset [m] $ is  $\sigma^c = [m] \backslash \sigma$ 
\item given a basepoint $x_0 \in X$, the right half smash product $X \rtimes Y = (X \times Y) / (x_0 \times Y)$
\item for a subset $I$ and a simplex $\sigma$ such that $\sigma \subset I$, define $$Y^{I, \sigma}:= \bigotimes \limits_{i\in \sigma} C_i \otimes \bigotimes \limits_{i \in I-\sigma} B_i$$
\end{enumerate}

Choosing a lexicographical ordering for the simplices of $K$ gives a filtration of the associated polyhedral product space and polyhedral smash product, which in turn leads to a spectral sequence converging to the reduced cohomology of $Z_K(\underline{X},\underline{A})$ and a spectral sequence converging to the reduced cohomology of $\widehat{Z}_K(\underline{X},\underline{A})$. The $E_1^{s,t}$ term for $Z_K(\underline{X},\underline{A})$ has the following description.

\begin{thm}[Bahri, Bendersky, Cohen and Gitler \cite{BBCG4}]\label{bbcgss} 
 There exist spectral sequences $$E^{s,t}_r \rightarrow H^*(Z_K(\underline{X},\underline{A}))$$ $$\widehat{E}^{s,t}_r \rightarrow H^*(\widehat{Z}_K({X},{A}))$$ with $ E^{s,t}_1 = \widetilde{H}^t((\widehat{X/A})^{ \sigma} \rtimes {A}^{\sigma^c})$ and $\widehat{E}^{s,t}_1 =   \widetilde{H}^t((\widehat{X/A})^{ \sigma} \wedge \widehat{A}^{ \sigma^c})$ where  $s$ is the index of $\sigma$ in the lexicographical ordering and the differential $d_r : E_r^{s,t} \rightarrow E_r^{s+r,t+1}$ is induced by the coboundary map $\delta : E \rightarrow W=sE$. Moreover, the spectral sequence is natural for embeddings of simplicial maps with the same number of vertices and with respect to maps of pairs. The natural quotient map $$Z_K(\underline{X},\underline{A}) \rightarrow \widehat{Z}_K(\underline{X},\underline{A})$$ induces a morphism of spectral sequences and the Splitting Theorem (\ref{splitting}) induces a morphism of spectral sequences. \end{thm}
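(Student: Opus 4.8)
The plan is to realize $E^{s,t}_r$ and $\widehat{E}^{s,t}_r$ as the cohomology spectral sequences of a filtration of $Z_K(\underline{X},\underline{A})$ and of $\widehat{Z}_K(\underline{X},\underline{A})$ indexed by the simplices of $K$. First I would fix an ordering $\emptyset=\sigma_0,\sigma_1,\dots,\sigma_N$ of the simplices of $K$ with the property that $\sigma_i\subseteq\sigma_j$ forces $i\le j$; ordering by cardinality and breaking ties lexicographically does this. Then $K_s:=\{\sigma_0,\dots,\sigma_s\}$ is a subcomplex of $K$ for every $s$ — it is closed under passing to faces precisely because each face of $\sigma_i$ has strictly smaller index — so one obtains increasing filtrations by CW-subcomplexes
\[
Z_{K_0}(\underline{X},\underline{A})\subseteq Z_{K_1}(\underline{X},\underline{A})\subseteq\dots\subseteq Z_{K_N}(\underline{X},\underline{A})=Z_K(\underline{X},\underline{A}),
\]
with $Z_{K_0}(\underline{X},\underline{A})=D(\emptyset)=A^{[m]}$, and likewise for $\widehat{Z}_K(\underline{X},\underline{A})$. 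The associated spectral sequence converges to $H^*(Z_K(\underline{X},\underline{A}))$ with no convergence subtleties since the filtration is finite, and reindexing so that $t$ records total (rather than complementary) degree puts the differentials in the stated form $d_r\colon E^{s,t}_r\to E^{s+r,t+1}_r$.

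Next I would identify the filtration quotients. Since $\sigma_s$ is introduced only after all of its proper faces, $Z_{K_s}=Z_{K_{s-1}}\cup D(\sigma_s)$ and $Z_{K_{s-1}}\cap D(\sigma_s)$ is exactly the locus in $D(\sigma_s)=\prod_{i\in\sigma_s}X_i\times\prod_{i\notin\sigma_s}A_i$ of points having some coordinate $i\in\sigma_s$ lying in $A_i$. Collapsing this ``fat wedge'' locus inside the $\prod_{i\in\sigma_s}X_i$ factor turns it into the smash product $(\widehat{X/A})^{\sigma_s}=\bigwedge_{i\in\sigma_s}X_i/A_i$, leaves the factor $\prod_{i\notin\sigma_s}A_i=A^{\sigma_s^c}$ untouched, and — because quotienting a product by ``one collapsed factor times the rest'' is a right half-smash — yields $Z_{K_s}/Z_{K_{s-1}}\cong(\widehat{X/A})^{\sigma_s}\rtimes A^{\sigma_s^c}$. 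The same computation for the polyhedral smash product gives $(\widehat{X/A})^{\sigma_s}\wedge\widehat{A}^{\sigma_s^c}$. Passing to reduced cohomology yields the stated $E^{s,t}_1$ and $\widehat{E}^{s,t}_1$.

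The substance of the argument, and where I expect the main obstacle, is the identification of the differentials. The strong $h^*$-freeness hypothesis is what makes it manageable: because $H^*(A_i)=B_i\oplus E_i$, $H^*(X_i)=B_i\oplus C_i$ and $\widetilde{H}^*(X_i/A_i)=C_i\oplus W_i$ are all free, the K\"unneth theorem applies to each filtration quotient with no correction terms, so $\widetilde{H}^*\bigl((\widehat{X/A})^{\sigma}\rtimes A^{\sigma^c}\bigr)=\bigotimes_{i\in\sigma}(C_i\oplus W_i)\otimes\bigotimes_{i\notin\sigma}(B_i\oplus E_i)$ and the differential decomposes one tensor factor at a time. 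The differential out of the $\sigma$-summand targets the summand of each cover $\sigma\cup\{j\}$ ($j\notin\sigma$) that lies in $K$, and on the distinguished $j$-th factor the connecting homomorphism of the relevant triple of filtration stages converts $H^*(A_j)$ into $\widetilde{H}^{\ast+1}(X_j/A_j)$ through exactly the coboundary $\delta$ of Definition \ref{free} — which annihilates $B_j$ and carries $E_j$ isomorphically onto $W_j=sE_j$ — while acting as the identity on every other factor. The delicate points to check are that the topological connecting map really is this algebraic $\delta$ on that one factor, that the half-smash with the unaffected $A_i$'s contributes nothing extra, that the Koszul signs coming from reordering factors are bookkept correctly, and that nothing is gained or lost because the cover $\sigma\cup\{j\}$ generally sits several steps later in the chosen ordering, so that the map is realized by $d_r$ for the appropriate $r$ rather than by $d_1$. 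A convenient way to pin all of this down is to fix CW structures on the pairs $(X_i,A_i)$, use the resulting product CW structure on $Z_K(\underline{X},\underline{A})$ compatible with the filtration, and follow the cellular connecting maps through the relative homeomorphisms of the previous step.

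Finally, naturality is essentially formal once the filtration is in place. A map of pairs $(X_i,A_i)\to(X'_i,A'_i)$ induces a filtration-preserving map of polyhedral products, hence a morphism of spectral sequences; an embedding of simplicial complexes on the same vertex set induces a filtration-preserving inclusion once the orderings are chosen compatibly; the canonical quotient $Z_K(\underline{X},\underline{A})\to\widehat{Z}_K(\underline{X},\underline{A})$, which collapses $A^{[m]}$ to the basepoint, is filtration-preserving and induces the morphism $E^{s,t}_r\to\widehat{E}^{s,t}_r$; and the stable splitting of Theorem \ref{splitting} is already visible on filtration quotients, since $Z_{K_s}/Z_{K_{s-1}}=(\widehat{X/A})^{\sigma_s}\rtimes A^{\sigma_s^c}$ splits after a single suspension as $\bigvee_{I\supseteq\sigma_s}(\widehat{X/A})^{\sigma_s}\wedge\widehat{A}^{I\setminus\sigma_s}$ over the subsets $I\subseteq[m]$ containing $\sigma_s$, and the $I$-th wedge summand is precisely the $\sigma_s$-filtration quotient of $\widehat{Z}_{K_I}(\underline{X_I},\underline{A_I})$. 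Reassembling these identifications produces the claimed morphism from the spectral sequence of $Z_K$ to the direct sum over $I\subseteq[m]$ of the $\widehat{E}$-spectral sequences of the full subcomplexes $K_I$.
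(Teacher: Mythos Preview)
The paper does not give a proof of this theorem: it is stated with attribution to \cite{BBCG4} and no proof environment follows. The only hint the paper supplies is the sentence immediately preceding the statement, ``Choosing a lexicographical ordering for the simplices of $K$ gives a filtration of the associated polyhedral product space and polyhedral smash product, which in turn leads to a spectral sequence\ldots'', so there is nothing in the paper itself to compare your argument against beyond that one-line indication of the construction.

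Your sketch is a faithful reconstruction of that indicated construction and is essentially correct. Two small remarks. First, you are right to refine the ordering to ``cardinality first, then lexicographic'': a pure lexicographic order on subsets (as the paper's phrasing might suggest) does \emph{not} guarantee that faces precede cofaces --- for instance $\{1,2,3\}$ precedes $\{2\}$ --- so your refinement is necessary for each $K_s$ to actually be a subcomplex. Second, your identification of the filtration quotient relies on the equality $Z_{K_{s-1}}\cap D(\sigma_s)=\bigcup_{\tau\subsetneq\sigma_s}D(\tau)$; this is true, but the argument you give (``$\sigma_s$ is introduced only after all of its proper faces'') is not quite the reason, since $Z_{K_{s-1}}$ may contain many $D(\sigma_i)$ that are not faces of $\sigma_s$ at all. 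The correct justification is that $D(\sigma_i)\cap D(\sigma_s)=D(\sigma_i\cap\sigma_s)$ for any two simplices, and $\sigma_i\cap\sigma_s$ is always a proper face of $\sigma_s$ when $i<s$. With that adjustment, the rest of your outline --- the K\"unneth decomposition under strong $h^*$-freeness, the identification of the connecting map with the coordinatewise $\delta\colon E_j\to W_j$, and the naturality claims --- matches what one finds in \cite{BBCG4}.
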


Following \cite{BBCG4}, Definition \ref{free} and the K{\"u}nneth Theorem imply that the entries $  \widetilde{H}^t((\widehat{X/A})^{ \sigma} \wedge \widehat{A}^{ \sigma^c})$ in the first page of the spectral sequence for $\widehat{Z}_K(\underline{X},\underline{A})$ decompose as a direct sum of spaces $W^N \otimes C^S \otimes B^T \otimes E^J$ such that $N \cup S =\sigma$, $T \cup J = \sigma^c$ and $N, S, J, T$ are disjoint. We have that $S$ is a simplex in $K$ as $N \cup S$ is a simplex in $K$. Since the differential is induced by the coboundary $\delta : E \rightarrow W$, consider all the possible summands $W^N \otimes C^S \otimes B^T \otimes E^J$ for $S$ and $T$ fixed. It must be the case that $N$ is a simplex in $K$ and that $N$ is a subset of $[m] \backslash (S \cup T)$. Therefore all such $N$ correspond to simplices in the link of $S$ in $K$ restricted to the vertex set $[m] \backslash (S \cup T)$.

  \begin{thm}[Bahri, Bendersky, Cohen and Gitler \cite{BBCG4}]\label{BBCGSS}
Let $(\underline{X},\underline{A})$ satisfy the decomposition described in Definition \ref{free} $$\begin{array}{ccccc}
H^*(A_{i})&=&B_{i}\oplus E_{i}\\
H^*(X_{i})&=&B_{i}\oplus C_{i}\\
 \end{array}$$
Then $$H^*(Z_K(\underline{X},\underline{A})) = \bigoplus_{I\subset [m], \sigma\subset I} E^{I^c}\otimes Y^{I, \sigma}\otimes \widetilde{H}^*(\Sigma| lk(\sigma)_{I^c}|) $$

where:

\begin{enumerate}
\item $\sigma$ is a simplex in $K$,
  \item  $lk(\sigma)_{I^c}=\{\tau \subset [m]\backslash I \;|\; \tau \cup \sigma \in K\}$ is the link of $\sigma$ in $K$ restricted to the set $[m]\backslash I$,
 \item $Y^{I, \sigma}= \bigotimes \limits_{i\in \sigma} C_i \otimes \bigotimes \limits_{i \in I-\sigma} B_i$, and
 \item $\widetilde{H}^*(\Sigma \emptyset)=1$.
\end{enumerate}
\end{thm}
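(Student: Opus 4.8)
The plan is to read off the answer from the Bahri--Bendersky--Cohen--Gitler spectral sequence of Theorem \ref{bbcgss}, by computing its $E_2$-page and checking that it collapses there. First I would record the shape of the $E_1$-page: by the strong $h^*$ freeness hypothesis (Definition \ref{free}) together with the K\"unneth theorem, the entry $E_1^{s,t}=\widetilde H^t\big((\widehat{X/A})^{\sigma}\rtimes A^{\sigma^c}\big)$ at a simplex $\sigma$ of $K$ splits, exactly as in the discussion preceding the theorem, as a direct sum of summands $W^N\otimes C^S\otimes B^T\otimes E^J$ with $N\sqcup S=\sigma$ and $T\sqcup J=\sigma^c$. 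Here the right half smash product $\rtimes A^{\sigma^c}$ contributes the \emph{unreduced} cohomology $H^*(A_i)=B_i\oplus E_i$ for $i\in\sigma^c$, which is why the whole of $B_i$ (including $1\in B_i$) survives into the formula.

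Next I would analyze the first differential. Since $d_1$ is induced by $\delta\colon E\to W$, it fixes the sets $S$ and $T$ and only transfers one vertex at a time from the $E$-part $J$ to the $W$-part $N$, and only legally, i.e. when $S\cup N\cup\{j\}$ is still a simplex of $K$. Hence $E_1$ decomposes as a direct sum, over all pairs $(S,T)$ with $S\in K$ and $S\cap T=\emptyset$, of subcomplexes of the form $C^S\otimes B^T\otimes\big(\bigoplus_{N}W^N\otimes E^{V\backslash N}\big)$, where $V:=[m]\backslash(S\cup T)$ and $N$ runs over the simplices of $lk(S)_{V}=\{\tau\subset V\mid\tau\cup S\in K\}$. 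The key step is to recognize this last tensor factor, with its $\delta$-induced differential, as a shift of the augmented simplicial cochain complex of $lk(S)_{V}$: when $lk(S)_V$ is the full simplex on $V$ the complex is the totalization of the acyclic complexes $E_i\xrightarrow{\delta}W_i$ and is therefore acyclic, and in general the "$N$ a simplex" constraint truncates it precisely to the complex computing $\widetilde H^{*-1}(|lk(S)_V|)=\widetilde H^{*}(\Sigma|lk(S)_V|)$ tensored with $E^{V}$ (with the convention $\widetilde H^*(\Sigma\emptyset)=1$ taking care of $lk(S)_V=\{\emptyset\}$, and a void link contributing nothing). Taking $d_1$-cohomology therefore yields
\[
E_2\;=\;\bigoplus_{\substack{S\in K\\ T\cap S=\emptyset}}C^S\otimes B^T\otimes E^{[m]\backslash(S\cup T)}\otimes\widetilde H^*\big(\Sigma\big|lk(S)_{[m]\backslash(S\cup T)}\big|\big).
\]

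Finally I would reindex and conclude. Writing $\sigma:=S$ and $I:=S\sqcup T$, so that $T=I\backslash\sigma$, $I^c=[m]\backslash(S\cup T)$, and $Y^{I,\sigma}=C^\sigma\otimes B^{I\backslash\sigma}$, the displayed $E_2$ is exactly the asserted sum over $I\subset[m]$ and $\sigma\subset I$ with $\sigma\in K$. It remains to see that the spectral sequence collapses at $E_2$ and that there is no extension problem; the former is standard for this spectral sequence, since the differential is entirely carried by $\delta$, which alters a simplex by only a single vertex, so $d_r=0$ for $r\ge2$, as in \cite{BBCG4}; the latter is automatic because all modules in sight are free $h^*$-modules, so the associated graded determines $H^*(Z_K(\underline X,\underline A))$ as an $h^*$-module. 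I expect the main obstacle to be the middle step: carefully matching each $(S,T)$-block with the shifted augmented cochain complex of the restricted link, and in particular the bookkeeping that makes the leftover $E$-factor exactly $E^{I^c}$ (the $E_i$ for $i$ a non-vertex of $lk(\sigma)_{I^c}$ being absorbed alongside the genuine reduced classes of $\Sigma|lk(\sigma)_{I^c}|$). One could alternatively run this computation first for the polyhedral smash product $\widehat Z_{K_I}(\underline{X_I},\underline{A_I})$ and then assemble the answer over all full subcomplexes via the Splitting Theorem (\ref{splitting}), which reorganizes the same bookkeeping.
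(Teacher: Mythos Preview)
Your proposal is correct and follows essentially the same line as the paper. In fact the paper does not give a self-contained proof of this theorem at all: it is quoted from \cite{BBCG4}, and the only argument supplied is the short paragraph immediately preceding the statement, which decomposes the $E_1$-entries via K\"unneth and the freeness hypothesis into summands $W^N\otimes C^S\otimes B^T\otimes E^J$ and then, fixing $S$ and $T$, identifies the admissible $N$'s with simplices of $lk(S)$ restricted to $[m]\setminus(S\cup T)$---precisely your middle step. Your write-up is more detailed than the paper's sketch (you address the collapse at $E_2$, the absence of extension problems, and the distinction between $B_i$ and $\widetilde B_i$ coming from $\rtimes$ versus $\wedge$), but the strategy is identical.
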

\begin{thm}[Bahri, Bendersky, Cohen and Gitler \cite{BBCG4}]\label{smashSS}
Let $$\begin{array}{ccccc}
\widetilde{H}^*(A_{i})&=&\widetilde{B}_{i}\oplus E_{i}\\
\widetilde{H}^*(X_{i})&=&\widetilde{B}_{i}\oplus C_{i}\\
 \end{array}$$
Then $$H^*(\widehat{Z}_K(\underline{X},\underline{A})) = \bigoplus_{I\subset [m], \sigma\subset I} E^{I^c}\otimes Y^{I, \sigma}\otimes \widetilde{H}^*(\Sigma| lk(\sigma)_{I^c}|) $$

where:
\begin{enumerate}
\item $\sigma$ is a simplex in $K$,
 \item $lk(\sigma)_{I^c}=\{\tau \subset [m]\backslash I \;|\; \tau \cup \sigma \in K\}$ is the link of $\sigma$ in $K$ restricted to the set $[m]\backslash I$, 
 \item $Y^{I, \sigma}= \bigotimes \limits_{i\in \sigma} C_i \otimes \bigotimes \limits_{i \in I-\sigma} \widetilde{B}_i$ where $\widetilde{B}_i=B_i \backslash \{1\}$, 
 \item $\widetilde{H}^*(\Sigma \emptyset)=1$.
\end{enumerate}
\end{thm}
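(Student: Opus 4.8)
The plan is to run the spectral sequence $\widehat{E}_r^{s,t}\Rightarrow H^*(\widehat{Z}_K(\underline{X},\underline{A}))$ of Theorem~\ref{bbcgss} and compute its $E_\infty$-page. First I would apply the K\"unneth theorem — applicable thanks to the freeness built into Definition~\ref{free} — to the entry $\widehat{E}_1^{s,t}=\widetilde{H}^t\big((\widehat{X/A})^{\rho}\wedge\widehat{A}^{\rho^c}\big)$, where $\rho$ denotes the simplex of $K$ with lexicographic index $s$. Using $\widetilde{H}^*(X_i/A_i)=C_i\oplus W_i$ for $i\in\rho$ and $\widetilde{H}^*(A_i)=\widetilde{B}_i\oplus E_i$ for $i\in\rho^c$, this expands as a direct sum of tensor products $W^N\otimes C^S\otimes\widetilde{B}^T\otimes E^J$ over all ways of writing $\rho=S\sqcup N$ and $\rho^c=T\sqcup J$ with $S,N,T,J$ pairwise disjoint; since $S\subset\rho\in K$ and $K$ is closed under faces, $S$ is a simplex of $K$.

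The structural point is that every differential $d_r$ is induced by the connecting homomorphism $\delta\colon E\to W=sE$, which annihilates the $C$- and $\widetilde{B}$-tensor factors and sends an $E_j$ isomorphically to $W_j$; in terms of the index sets it moves a single vertex $j$ from $J$ into $N$, keeping $S$ and $T$ fixed. Hence the spectral sequence splits as a direct sum over the pairs $(S,I)$ with $I:=S\cup T$, and within the $(S,I)$-summand one factors out the constant tensor factor $C^S\otimes\widetilde{B}^T=Y^{I,S}$. What remains is the complex $\bigoplus_{N}W^N\otimes E^{I^c\setminus N}$, where $N$ ranges over the subsets of $I^c=[m]\setminus I$ with $N\cup S\in K$, i.e.\ over the simplices of $lk(S)_{I^c}$, with differential $\delta$; the summands that $\delta$ would send outside $E_1$ (those with $N\cup\{j\}\cup S\notin K$) are dropped, which is consistent because, $K$ being closed under faces, those summands span a subcomplex. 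Writing $W_i=sE_i$ and pulling $E^{I^c}$ out of every term, this block complex is isomorphic to $E^{I^c}\otimes s\widetilde{C}^{\,*}\!\big(lk(S)_{I^c}\big)$, the shift of the reduced simplicial cochain complex of the restricted link, the term $N=\emptyset$ playing the role of the empty simplex (this is the source of the convention $\widetilde{H}^*(\Sigma\emptyset)=1$). Its cohomology is therefore $E^{I^c}\otimes Y^{I,S}\otimes\widetilde{H}^*\!\big(\Sigma|lk(S)_{I^c}|\big)$.

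Summing these contributions over all $I\subset[m]$ and all simplices $S\subset I$ of $K$ (this is the index pair $(\sigma,I)$ of the statement, with $\sigma\leftrightarrow S$) yields the $E_\infty$-page, so the formula will follow once two points are settled: (i) the pages $d_1,d_2,\dots$ of the spectral sequence assemble exactly into the full simplicial coboundary $\delta(N)=\sum_{j\in I^c\setminus N}\pm\,(N\cup\{j\})$, so that $E_\infty$ is the cohomology computed above; and (ii) there are no nontrivial extension problems in recovering $H^*(\widehat{Z}_K(\underline{X},\underline{A}))$ from $E_\infty$. Point (ii) is exactly where the strong $h^*$-freeness hypothesis is used: freeness of all of $E_i,\widetilde{B}_i,C_i,W_i$ forces the relevant short exact sequences to split.

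I expect the main obstacle to be the careful bookkeeping behind (i): one must verify that the lexicographic filtration is compatible with face inclusion, so that the cumulative effect of the higher differentials on the $E_1$-page is precisely the coboundary above, with signs matching the standard orientation conventions for simplicial cochains — the naturality clauses of Theorem~\ref{bbcgss} are what pin those signs down. The suspension degree shift and the degenerate case $lk(S)_{I^c}=\{\emptyset\}$ (empty geometric realization) must likewise be tracked against the stated conventions.
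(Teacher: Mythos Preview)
Your proposal is correct and follows the same line of argument the paper sketches (following \cite{BBCG4}) in the paragraph preceding Theorems~\ref{BBCGSS} and~\ref{smashSS}: K\"unneth-decompose $\widehat{E}_1$ into summands $W^N\otimes C^S\otimes \widetilde{B}^T\otimes E^J$, fix $(S,T)$ and observe that the remaining $N$ range exactly over the simplices of $lk(S)_{I^c}$ with $I=S\cup T$, so that the differential becomes the simplicial coboundary of that link. The paper does not give a self-contained proof beyond this outline, deferring to \cite{BBCG4}, so your more detailed treatment of the bookkeeping in (i) and the extension issue in (ii) goes somewhat beyond what the paper records.
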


 A description of the ring structure in $H^*(Z_K\xa)$ is given using the decomposition from Theorems \ref{BBCGSS} and \ref{smashSS}. It is induced by a pairing involving links $$ \widetilde{H}^*(\Sigma | lk(\sigma_1)|_{I_1^c}) \otimes \widetilde{H}^*(\Sigma | lk(\sigma_2)|_{I_2^c}) \rightarrow \widetilde{H}^*(\Sigma | lk(\sigma_3)|_{I_3^c})$$ defined in terms of the $*$-product, introduced in \cite{BBCGcup}, where $I_3$ and $\sigma_3$ are defined in terms of $\sigma_1, \sigma_2, I_1$ and $I_2$.  
 \begin{thm}[Theorem 6.1 in \cite{BBCG4}] 
 Two classes \begin{align*}\alpha, \beta \in & H^*(Z_K(\underline{X},\underline{A}))\\& = \bigoplus_{I\subset [m], \sigma\subset I} E^{[m]-I}\otimes C^\sigma \otimes B^{I-\sigma}\otimes \widetilde{H}^*(\Sigma  \text{lk}(\sigma)_{I^c}),\end{align*}  
are of the form 
$$\begin{array}{ccc}
\alpha &=&a_1 \otimes a_2 \otimes \ldots \otimes a_m \otimes n_\alpha \\
\beta &=&b_1 \otimes b_2 \otimes \ldots \otimes b_m \otimes n_\beta
 \end{array}$$
 where $n_\alpha \in \widetilde{H}^*(\Sigma \text{lk}(\sigma)_{I^c})$ and $n_\beta \in \widetilde{H}^*(\Sigma \text{lk}(\tau)_{J^c})$. 

The cup product of $\alpha$ and $\beta$ is given in terms of the $*$-product and a componentwise product induced by the multiplicative structure of $H^*(X_i)$ and $H^*(A_i)$.
\end{thm}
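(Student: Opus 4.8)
Since the header of this theorem cites it as Theorem~6.1 of \cite{BBCG4}, the plan is to reproduce the structure of that argument, which recovers the ring structure from the multiplicative spectral sequence of Theorem~\ref{bbcgss} rather than from the (merely stable) Splitting Theorem~\ref{splitting}.

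The first step is to realize the cup product geometrically in a way that is compatible with the polyhedral product construction. The diagonal $\prod_i X_i \to \prod_i X_i \times \prod_i X_i = \prod_i (X_i\times X_i)$ carries a point of $D(\sigma)$ into $\prod_{i\in\sigma}(X_i\times X_i)\times\prod_{i\notin\sigma}(A_i\times A_i)$, so it restricts to a map $\Delta\colon Z_K(\underline X,\underline A)\to Z_K(\underline{X\times X},\underline{A\times A})$ into the ``doubled'' polyhedral product. The two coordinate projections are maps of pairs $(X_i\times X_i,A_i\times A_i)\to(X_i,A_i)$ and hence induce $p_1,p_2\colon Z_K(\underline{X\times X},\underline{A\times A})\to Z_K(\underline X,\underline A)$ with $p_j\Delta=\mathrm{id}$. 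Consequently $\alpha\smile\beta=\Delta^*\bigl(p_1^*\alpha\smile p_2^*\beta\bigr)$, which reduces the problem to (i) describing $H^*(Z_K(\underline{X\times X},\underline{A\times A}))$ together with the external product on it, and (ii) computing $\Delta^*$.

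For (i) I would check that each doubled pair $(X_i\times X_i,A_i\times A_i)$ again satisfies the strong $h^*$-freeness condition of Definition~\ref{free}, with free modules $B_i',C_i',E_i',W_i'$ produced from the K\"unneth theorem out of $H^*(X_i)=B_i\oplus C_i$ and $H^*(A_i)=B_i\oplus E_i$, using the cofibre sequence relating $(X_i\times X_i,A_i\times A_i)$ to $X_i/A_i\wedge X_i/A_i$. Applying Theorem~\ref{BBCGSS} to the doubled pair and invoking the naturality of the spectral sequence with respect to the maps of pairs above gives the external product summand by summand, and then the same naturality computes $\Delta^*$. The answer splits into two independent pieces. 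On the ``linear'' tensor factors $a_i\otimes b_i\in H^*(X_i\times X_i)$ or $H^*(A_i\times A_i)$, $\Delta^*$ is the componentwise product induced by the ring structure of $H^*(X_i)$, respectively $H^*(A_i)$; recording which of $B_i,C_i,E_i$ each of $a_i,b_i$ lies in, and using $1\in B_i$ together with $B_i\cdot B_i\subset B_i$, $B_i\cdot C_i\subset C_i$, and the maps $b\mapsto b\mapsto 0$, $c\mapsto c\mapsto 0$, $e\mapsto w\mapsto 0$ of the long exact sequence, pins down, purely from $\sigma\subset I$ and $\tau\subset J$, the index data $\rho\subset M$ of the summand in which the product sits. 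On the remaining factor $n_\alpha\otimes n_\beta\in\widetilde H^*(\Sigma\,\mathrm{lk}(\sigma)_{I^c})\otimes\widetilde H^*(\Sigma\,\mathrm{lk}(\tau)_{J^c})$, the induced pairing lands in $\widetilde H^*(\Sigma\,\mathrm{lk}(\rho)_{M^c})$ and is identified, via the homeomorphism $|K_1\ast K_2|\cong|K_1|\ast|K_2|$ and the homotopy equivalence $X\ast Y\simeq\Sigma(X\wedge Y)$ applied to links, with the $\ast$-product of \cite{BBCGcup}.

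The technical heart of the proof, and the main obstacle, is making the spectral sequence genuinely multiplicative and controlling cross terms. One must show that the lexicographic (simplicial) filtration defining the spectral sequence is carried by $\Delta$ into a product of filtration stages with additive filtration degree, so that the external and cup products are already realized on the $E_1$-page and a Leibniz rule lets them pass to $E_\infty$; this requires comparing the thin doubled polyhedral product with the full product $Z_K\times Z_K$ and its product filtration. One must also control the fact that a componentwise product such as $C_i\cdot C_i\subset H^*(X_i\times X_i)$ need not lie in a single one of $B_i',C_i',E_i',W_i'$: the unwanted components have to be shown to vanish after applying $\Delta^*$ or to be killed by the differential induced by $\delta\colon E\to W$, which is exactly where the precise form of the strong freeness decomposition and the Hochster-type vanishing for full subcomplexes enter. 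Once these points are settled, assembling the two pieces from step (ii) yields precisely the stated description of $\alpha\smile\beta$ as a $\ast$-product on the suspended-link factor times a componentwise product on the linear factors.
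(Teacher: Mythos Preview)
The paper does not prove this theorem at all: it is stated as a citation of Theorem~6.1 in \cite{BBCG4} and no proof (or even proof sketch) is given in the present paper. So there is nothing in the paper to compare your proposal against; your instinct in the first sentence---that the argument lives in \cite{BBCG4} and must be reproduced from there---is exactly right.

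That said, as a reconstruction of the BBCG4 argument your outline is broadly on target: factoring the cup product through the diagonal into the ``doubled'' polyhedral product $Z_K(\underline{X\times X},\underline{A\times A})$, invoking naturality of the filtered spectral sequence with respect to maps of pairs, and then reading off the product as a componentwise product on the $E/C/B$ tensor factors together with the $\ast$-product on the suspended-link factors is precisely the shape of the argument. The caveats you flag (multiplicativity of the lexicographic filtration under $\Delta$, and controlling where products like $C_i\cdot C_i$ land in the doubled decomposition) are the genuine technical points, and in \cite{BBCG4} they are handled rather than merely gestured at; if you were actually writing this up you would need to discharge them, not just name them. But for the purposes of this paper the theorem is quoted, not proved, so your proposal already goes well beyond what the paper itself supplies.
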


 For the pair of spaces $(CA_i,A_i)$, where $CA_i$ is the cone on $A_i$, the modules are given by $B_i=1$, $C_i=0$ and $E_i=\widetilde{H}^*(A)$. The links are all of the form $ K_I$ for $I\subset [m]$. Therefore, it can be seen from Theorem \ref{BBCGSS} that the product structure in  $H^*(Z_K(\underline{CA},\underline{A}))$ can be described in terms of the product structure in $H^*(A)$ and $\widetilde{H}^*(\Sigma \mathcal{K}_I)$. 
 
 Due to the decomposition in Equation \ref{realmac} and work in \cite{BBCGcup},  the ring structure in  $H^*(Z_K(\underline{CA},\underline{A}))$ can be described in terms of the ring structure in $H^*(A)$ and $H^*(Z_K(D^1,S^0))$. 
 
 \begin{thm}[Theorem 1.9 in \cite{BBCGcup}]\label{generalringPP}
 Assume that any finite product
of $A_i$ with $Z_{K_I}(D^1,S^0)$ for all $I$ satisfies the strong form of the K{\"u}nneth Theorem.  Then
the cup product structure for the cohomology algebra $H^*
(Z_K(\underline{CA},\underline{A}))$ is a functor of the
cohomology algebras of $A_i$, and $Z_{K_I}(D^1,S^0)$ for all $I$.
 \end{thm}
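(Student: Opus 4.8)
The plan is to specialize the general product formula of \cite{BBCG4} to the cone pair $(CA_i,A_i)$, and then to recognize the combinatorial data entering that formula as the rings $H^*(Z_{K_I}(D^1,S^0))$ together with the algebras $H^*(A_i)$.

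First I would record the additive picture. For $(CA_i,A_i)$ the freeness decomposition of Definition \ref{free} specializes to $B_i=1$, $C_i=0$, $E_i=\widetilde H^*(A_i)$, so in Theorem \ref{BBCGSS} every summand with $\sigma\neq\emptyset$ vanishes; only $\sigma=\emptyset$ survives, with $Y^{I,\emptyset}=\bigotimes_{i\in I}B_i=1$ and $\mathrm{lk}(\emptyset)_{I^c}=K_{I^c}$. Writing $J=I^c$, Theorem \ref{BBCGSS} collapses to
$$H^*(Z_K(\underline{CA},\underline{A}))\;\cong\;\bigoplus_{J\subset[m]}\Big(\bigotimes_{i\in J}\widetilde H^*(A_i)\Big)\otimes\widetilde H^*(\Sigma K_J),$$
and the K{\"u}nneth hypothesis is used precisely to guarantee that these tensor products carry no Tor correction, so that the BBCG spectral sequence degenerates to this sum both additively and multiplicatively. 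Carrying out the same computation with $A_i=S^0$ reproduces Equation \ref{realmac}, $H^*(Z_{K_I}(D^1,S^0))=\bigoplus_{J\subset I}\widetilde H^*(\Sigma K_J)$, so the blocks $\widetilde H^*(\Sigma K_J)$ are literally the additive pieces of the real moment-angle complexes over the restrictions of $K$.

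Next I would unwind the cup product using Theorem 6.1 of \cite{BBCG4}. For a class lying in the $J$-summand the $X_i$-entries are forced to equal $1\in B_i$ and the $A_i$-entries lie in $E_i=\widetilde H^*(A_i)$, so the componentwise product is governed entirely by the multiplications $H^*(CA_i)\otimes H^*(A_i)\to H^*(A_i)$ and $H^*(A_i)\otimes H^*(A_i)\to H^*(A_i)$, i.e. by the algebra $H^*(A_i)$ with $H^*(CA_i)=1$ acting as the unit; and the link entries $n_\alpha\in\widetilde H^*(\Sigma K_{J_1})$, $n_\beta\in\widetilde H^*(\Sigma K_{J_2})$ are combined by the $*$-product of \cite{BBCGcup}, the two together fixing the new index set. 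Thus the ring structure is the $*$-product on the link blocks, decorated by the componentwise products of the $H^*(A_i)$. Running the identical analysis for $(D^1,S^0)$ identifies the cup product of $H^*(Z_K(D^1,S^0))$ with the $*$-product on $\{\widetilde H^*(\Sigma K_J)\}_J$ twisted by the componentwise products of $H^*(S^0)$; since the $*$-product of \cite{BBCGcup} depends only on $K$ and, universally, on the internal degrees and signs of its inputs, it is one and the same operation in both settings.

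Finally I would check functoriality. Both inputs to the product formula are natural: a map of algebras $H^*(A_i')\to H^*(A_i)$ induces a map of the componentwise factors, and the simplicial inclusions $K_I\hookrightarrow K_{I'}$ — via the naturality clauses of Theorem \ref{bbcgss} for maps of pairs and for the Splitting Theorem \ref{splitting} — induce the structure maps among the $H^*(Z_{K_I}(D^1,S^0))$ used in \cite{BBCGcup}. Since the cup product on $H^*(Z_K(\underline{CA},\underline{A}))$ is produced from these data by a single fixed rule, it is a functor of the algebras $H^*(A_i)$ and of the diagram $\{H^*(Z_{K_I}(D^1,S^0))\}_{I\subset[m]}$, as claimed. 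I expect the delicate point to be the middle step: rigorously certifying that the $*$-product occurring inside the cup product for the cone pair is exactly the combinatorial $*$-product that governs $H^*(Z_K(D^1,S^0))$ — that no residual dependence on the particular pair $(X_i,A_i)$ creeps in — which is where the explicit construction of the $*$-product in \cite{BBCGcup} and the naturality of the BBCG spectral sequence have to be pushed through with care.
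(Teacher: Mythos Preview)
The paper does not actually prove this statement: it is quoted verbatim as Theorem~1.9 of \cite{BBCGcup}, and the only argument the paper supplies is the short paragraph immediately preceding it, which observes that for the pair $(CA_i,A_i)$ one has $B_i=1$, $C_i=0$, $E_i=\widetilde H^*(A_i)$, so that all links reduce to full subcomplexes $K_I$, and then defers to Equation~\ref{realmac} and to \cite{BBCGcup} for the rest. Your proposal is exactly an expansion of that paragraph into a genuine argument, so in spirit you are following the same route the paper gestures at; there is no alternative proof in the paper to compare against.

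As for the content of your sketch: the additive reduction and the identification of the componentwise factor with the algebras $H^*(A_i)$ are correct and match what the paper says. The point you yourself flag as delicate---that the $*$-product on the link blocks for the pair $(CA_i,A_i)$ coincides with the one for $(D^1,S^0)$, with no residual dependence on the pair---is precisely the substance of Theorem~1.9 in \cite{BBCGcup}, and is not something that can be read off from Theorems~\ref{BBCGSS} or~6.1 of \cite{BBCG4} alone. So your outline is accurate as a reduction, but the real work lives in the cited reference, just as the paper indicates.
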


\section{Multiplicative structure of $H^*(Z_K(D^1,S^0))$}

Recall from Equation \ref{realmac} that each subset $I$ of $[m]$ such that the full subcomplex $\mathcal{K}_I$ is not contractible  corresponds to a generator of $H^*(Z_K(D^1,S^0))$.

To compute the cohomology of a real moment-angle complex, we will use a filtered chain complex induced by the long exact sequence of the pair $(D^1,S^0)$, denoted $C_K$ and constructed in \cite{BBCG4}. For $(X_i,A_i) = (D^1,S^0)$, let $\widetilde{H}^*(A_k)=\widetilde{H}^*(S^0)$ be generated by $t_k$ and $\widetilde{H}^*(X_k/A_k)=\widetilde{H}^*(S^1)$ be generated by $s_k$.
\begin{defn}\label{chain}
The chain complex $C(K_I)$ is generated by $y_\sigma := \otimes y_i$ where $\sigma \in K_I$ and $$y_i= \begin{cases}
  s_{i} &  i\in \sigma \\
  t_i &   i \in I-\sigma \\
  1 &  k \notin I
  \end{cases}$$
  The differential is defined by $$d_I(y_\sigma)=\sum_\tau (-1)^{n(\tau)} y_\tau$$ where $\sigma \subset \tau \in K_I$ and $\tau=\sigma \cup v$ for some vertex $v \in I$. The integer $n(\tau)$ is defined by the usual sign convention of a graded derivation. In particular, the coboundary $\delta$ acts on each factor of $y_\sigma$ by $\delta(s_i)=0$ and $\delta(t_i)=s_i$, and every time it passes an $s_i$ a factor of $(-1)$ is introduced.
  \end{defn}
  Then $$C_K=\bigoplus_{I\subset [n]} C(K_I)$$ and $H^*(C_K)=H^*(Z_K(D^1,S^0))$.

   It follows from work of Li Cai in \cite{cai2017} that the chain level cup product of two generators is induced by the following
  $$s_i \smile s_i=0 , \hspace{1cm} t_i \smile t_i=t_i, \hspace{1cm} s_i \smile t_i=s_i, \hspace{1cm} t_i \smile s_i=0.$$

  \subsection{Boundary of a polygon}
  We will consider the case of $K$ the boundary of a polygon. By Theorem \ref{realmac}, we need to consider all subsets of $[n]$ to find the cohomology groups. By convention, when $I$ is the empty set, $H^*(\Sigma \mathcal{K}_I)=1$. The suspension of the whole complex $K$ is a degree two generator. The following lemma gives the generators of degree one.

  \begin{lem} \label{first} Suppose $K$ is the boundary of an $n$-gon. Let $I=I_1 \sqcup I_2 \sqcup \ldots \sqcup I_p$ be a subset of $[n]$ such that $\mathcal{K}_I$ has exactly $p$ maximal connected components,  $\mathcal{K}_I\simeq \bigvee_{p-1} S^0$. Then
	$$H^1(\Sigma \mathcal{K}_I)=\faktor{\langle \sum_{i\in I_1}y_{\{i\}}, \sum_{i\in I_2}y_{\{i\}}, \ldots, \sum_{i\in I_{p}}y_{\{i\}}\rangle}{\langle \sum_{i\in I}y_{\{i\}}\rangle}$$
	\end{lem}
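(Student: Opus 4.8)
The plan is to identify $H^1(\Sigma\mathcal{K}_I)$ with $\widetilde{H}^0(\mathcal{K}_I)$ via the suspension isomorphism, and then compute the reduced zeroth cohomology of $\mathcal{K}_I$ directly from the chain complex $C(K_I)$ of Definition \ref{chain}. Since $K$ is the boundary of an $n$-gon, every vertex of $\mathcal{K}_I$ lies in at most one edge of $\mathcal{K}_I$ on each side, so $\mathcal{K}_I$ is a disjoint union of line segments (paths) and isolated points, i.e. $p$ contractible pieces $I_1,\dots,I_p$; this is exactly the hypothesis $\mathcal{K}_I\simeq\bigvee_{p-1}S^0$. First I would write out $C(K_I)$ in low degrees: the degree-zero part (cochains on vertices) is spanned by the $y_{\{i\}}$ for $i\in I$, the degree-one part (cochains on edges) is spanned by $y_{\{i,j\}}$ for edges $\{i,j\}\in\mathcal{K}_I$, and the differential sends $y_{\{i,j\}}$ to $\pm y_{\{i\}}\pm y_{\{j\}}$ with the sign convention from Definition \ref{chain}. (Note: in this cochain setup $H^*(C_K)$ is the cohomology, so "$H^1(\Sigma\mathcal{K}_I)$" corresponds to the cokernel of $d$ on edges, i.e. $\widetilde H^0$.)

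The key computation is then purely linear-algebraic. The cokernel of $d\colon C^{\mathrm{edges}}\to C^{\mathrm{vertices}}$ is the quotient of $\langle y_{\{i\}} : i\in I\rangle$ by the span of the boundaries of edges. Because $\mathcal{K}_I$ is a disjoint union of the paths $\mathcal{K}_{I_k}$, the image of $d$ restricted to component $I_k$ is precisely the set of linear combinations $\sum_{i\in I_k}c_i y_{\{i\}}$ with $\sum c_i=0$ (each edge of a path contributes $y_{\{i\}}-y_{\{i+1\}}$, and a connected path on $|I_k|$ vertices gives an $(|I_k|-1)$-dimensional image — the "sum-zero" hyperplane). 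Hence the cokernel is spanned by the classes of the $y_{\{i\}}$, with the relation that within each block $I_k$ all the $y_{\{i\}}$ become equal to a single class represented by $\tfrac{1}{?}$—more precisely, modding by the sum-zero relations identifies $\sum_{i\in I_k}y_{\{i\}}$ as $|I_k|$ times that common class; since we want a basis over $\mathbb{Z}$ I would present the cokernel as the free module on $\sum_{i\in I_k}y_{\{i\}}$, $k=1,\dots,p$. Finally, reduced cohomology kills one dimension: the element $\sum_{i\in I}y_{\{i\}}=\sum_{k=1}^p\sum_{i\in I_k}y_{\{i\}}$ is the image of the augmentation/the class that dies upon reducing (equivalently, it pairs trivially under the reduction from $H^0$ to $\widetilde H^0$), which yields exactly the stated presentation
$$H^1(\Sigma\mathcal{K}_I)=\faktor{\langle \sum_{i\in I_1}y_{\{i\}},\ \ldots,\ \sum_{i\in I_p}y_{\{i\}}\rangle}{\langle \sum_{i\in I}y_{\{i\}}\rangle}.$$

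The main obstacle I anticipate is bookkeeping rather than conceptual: getting the signs $(-1)^{n(\tau)}$ right so that the image of $d$ on each path component is genuinely the sum-zero sublattice (and not some index-$2$ or otherwise proper sublattice), and checking that passing from the chain-level $H^0$ to the reduced group $\widetilde H^0$ corresponds exactly to quotienting by $\langle\sum_{i\in I}y_{\{i\}}\rangle$ and not by something else. I would handle the sign issue by choosing a consistent orientation along each path $\mathcal{K}_{I_k}$ so that consecutive edges telescope, and handle the reduction by invoking the standard fact that for a space with $p$ path components $\widetilde H^0$ is free of rank $p-1$, generated by differences of the component classes, which here are represented (after clearing denominators, using that we may rescale representatives) by the $\sum_{i\in I_k}y_{\{i\}}$ modulo their total sum.
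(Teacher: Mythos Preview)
Your proposal has the direction of the differential in $C(K_I)$ reversed, and this causes a real problem, not just a bookkeeping one. In Definition~\ref{chain} the differential sends $y_\sigma$ to a signed sum of $y_\tau$ with $\tau\supset\sigma$, so $d$ \emph{raises} degree: $d(y_{\{i\}})$ is a combination of edge classes $y_{\{i,j\}}$, and $d(y_\emptyset)=\sum_{i\in I}y_{\{i\}}$. Thus $H^1(\Sigma\mathcal{K}_I)=H^1(C(K_I))$ is $\ker\big(d\colon\langle y_{\{i\}}\rangle\to\langle y_{\{i,j\}}\rangle\big)\big/\operatorname{im}\big(d\colon\langle y_\emptyset\rangle\to\langle y_{\{i\}}\rangle\big)$, not the cokernel you describe. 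The paper's argument is exactly this kernel/image computation: one checks directly that each component sum $\sum_{i\in I_k}y_{\{i\}}$ lies in $\ker d$ (the edge contributions telescope within a connected component), and the single coboundary is $d(y_\emptyset)=\sum_{i\in I}y_{\{i\}}$.

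The reason your reversed computation does not merely give the same answer in different clothing is the point you yourself flag: in the cokernel of a boundary map from edges to vertices, all vertices in a component become equal, so $\sum_{i\in I_k}y_{\{i\}}$ represents $|I_k|$ times a generator, not a generator. Over $\mathbb{Z}$ you cannot ``rescale representatives'' to fix this, and the presentation you want does not drop out. By contrast, in the correct cochain picture the kernel of $d$ on vertex classes is exactly the lattice of locally constant functions, for which the characteristic functions $\sum_{i\in I_k}y_{\{i\}}$ are an honest $\mathbb{Z}$-basis; quotienting by the one-dimensional image $\langle\sum_{i\in I}y_{\{i\}}\rangle$ then gives the stated presentation with no divisibility issues. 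Once you flip the arrow, the sign worry you anticipated is also easier: you only need that within a path component the edge terms cancel in $d\big(\sum_{i\in I_k}y_{\{i\}}\big)$, which is what the paper verifies.
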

	\begin{proof}
	Let $I_1=\{i_1, \ldots, i_c\}$. If $i_1=i_c$, then $d(y_{\{i_1\}})=0$ and $y_{\{i_1\}}$ is clearly a cocycle. If $i_1\neq i_c$, then the differential will not be trivial. If $+y_{\{e\}}$ for some edge $e\in K_{I_1}$ appears as a summand in the image of $d(y_{\{v\}})$ for some vertex $v\in I_1$, then $e=\{v-1,v\}$ or $e=\{1,n\}$ (since $y_{\{e\}}$ was positive, we could not have passed an $s$). Additionally, since $y_{\{e\}}$ was in the image of $y_{\{v\}}$, it must be the case that $v-1\in I_1$, so $-y_{\{e\}}$ is a term in the image of $y_{\{v-1\}}$ under $d$. If it had been the case that $e=\{1,n\}$, then $\{1\} \in K_I$ and $-y_{\{e\}}$ would be in the image of $y_{\{1\}}$. Since it is only possible for $y_{\{e\}}$ to be in the image of $y_{\{v-1\}}$ or $y_{\{v\}}$,  the terms $y_{\{e\}}$ cancel. This means that  $d(y_{\{i_1\}}+\ldots + y_{\{i_c\}})=0$ since $i_1, \ldots, i_c$ are all the vertices in a connected component of $\mathcal{K}_I$. Without loss of generality, the same is true for the other connected components. Lastly, $d(\emptyset)$ is the sum of $y_{\{i\}}$ for $i\in I$. 
	\end{proof}
\begin{cor}\label{cor:rank}
  If $I=I_1 \sqcup I_2 \sqcup \ldots \sqcup I_p$ is a subset of $[n]$ such that $\mathcal{K}_I$ has exactly $p$ maximal connected components with $p>1$, then  $H^1(\Sigma |\mathcal{K}_I|)$ has rank $p-1$ and a basis of generators can be chosen by picking any $p-1$ of the $p$ disjoint subsets $I_1, I_2, \ldots, I_p$.
\end{cor}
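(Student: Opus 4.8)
The plan is to deduce Corollary \ref{cor:rank} directly from Lemma \ref{first}. By the lemma, $H^1(\Sigma \mathcal{K}_I)$ is presented as the quotient of the free abelian group on the $p$ generators $c_j := \sum_{i\in I_j} y_{\{i\}}$, $j=1,\ldots,p$, by the single relation $\sum_{j=1}^p c_j = \sum_{i\in I} y_{\{i\}} = 0$. First I would observe that the $c_j$ are $\Z$-linearly independent in the chain group $C(K_I)$ before passing to the quotient: they are supported on disjoint sets of basis elements $y_{\{i\}}$ (since the $I_j$ are disjoint), so no nontrivial $\Z$-combination of them vanishes. Hence the subgroup they generate is free of rank $p$, and quotienting by the infinite cyclic subgroup generated by $c_1 + \cdots + c_p$ — which is a primitive element, being the sum of distinct basis vectors — yields a free abelian group of rank $p-1$. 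This also matches the expected rank from Equation \ref{realmac}, since $\mathcal{K}_I \simeq \bigvee_{p-1} S^0$ gives $\widetilde{H}^0(\mathcal{K}_I) \cong \Z^{p-1}$, hence $H^1(\Sigma \mathcal{K}_I) \cong \Z^{p-1}$.

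Next I would establish the statement about bases. The presentation shows that in $H^1(\Sigma \mathcal{K}_I)$ we have the relation $c_p = -(c_1 + \cdots + c_{p-1})$, so the classes of $c_1, \ldots, c_{p-1}$ generate; combined with the rank count they form a basis. By the symmetry of the relation $\sum_{j=1}^p c_j = 0$ under permuting the indices $\{1,\ldots,p\}$, the same argument applies to any $(p-1)$-element subset of $\{c_1,\ldots,c_p\}$: omitting $c_k$, the relation lets us write $c_k = -\sum_{j\neq k} c_j$, so the remaining $p-1$ classes span, and again a spanning set of size equal to the rank is a basis. Thus any $p-1$ of the subsets $I_1,\ldots,I_p$ furnish a basis of generators, as claimed.

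There is essentially no serious obstacle here; the corollary is a routine linear-algebra unwinding of the explicit presentation in Lemma \ref{first}. The only point requiring a moment of care is the claim that the relator $c_1 + \cdots + c_p$ generates a \emph{direct summand} of the free group $\langle c_1,\ldots,c_p\rangle$ (equivalently, is part of a basis), so that the quotient is free rather than merely having rank $p-1$ with possible torsion — this follows because $c_1+\cdots+c_p = \sum_{i\in I} y_{\{i\}}$ is a sum of distinct members of the ambient basis, hence unimodular, so $\{c_1+\cdots+c_p, c_1,\ldots,c_{p-1}\}$ is again a $\Z$-basis of $\langle c_1,\ldots,c_p\rangle$. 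Everything else is immediate.
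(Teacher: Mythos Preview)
Your proposal is correct and is exactly the argument the paper intends: the corollary is stated without proof immediately after Lemma~\ref{first}, and your write-up is simply a careful linear-algebra unwinding of the presentation given there. The only extra content you supply beyond what the paper leaves implicit is the observation that $c_1+\cdots+c_p$ is primitive, which justifies that the quotient is free and not merely of rank $p-1$.
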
	
		Next, the following lemma will show how generators coming from different subsets of $[n]$ multiply. Consider subsets $I,J \subset [n]$ such that $I\cup J=[n]$. We will employ a slight change in notation: replacing $y$'s associated to $I$ with $a$'s and $y$'s associated to $J$ with $b$'s to differentiate between generators in $C(K_I)$ and generators in $C(K_J)$. Recall that $a_{\{i\}}$ is the generator associated to the vertex $i$, whereas $a_i$ is the $i$th factor of a generator.
	
	\begin{lem}\label{lem:product}
  Suppose $a_{\{i\}} \in C(K_I)$ and $b_{\{j\}} \in C(K_J)$. Then $a_{\{i\}}=a_1\otimes a_2 \otimes \ldots \otimes \ a_n$ where $$a_k=\begin{cases}
  s_{i} &  k=i \\
  t_k &   k\in I\backslash \{i\} \\
  1 &  k \notin I
  \end{cases}$$
Define $b_{\{j\}}$ similarly. Then $$a_{\{i\}} \smile b_{\{j\}}= \begin{cases}
0 & j \in I \text{ or } i-j \neq \pm 1 \pmod{n}\\
y_{\{i,j\}} & j>i\\
-y_{\{j,i\}} & j<i \end{cases} $$
  \end{lem}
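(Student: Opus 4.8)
The statement is a purely chain-level computation using the cup product rules $s_i\smile s_i=0$, $t_i\smile t_i=t_i$, $s_i\smile t_i=s_i$, $t_i\smile s_i=0$ recalled just before the lemma, applied factor-by-factor with the Koszul sign introduced each time the coproduct formalism passes an $s$. The first move is to write both classes as tensor products over all $n$ coordinates and to observe that in $C_K=\bigoplus_{I} C(K_I)$ the product $a_{\{i\}}\smile b_{\{j\}}$ lands in the summand indexed by $I\cup J=[n]$, so the result is a chain in $C(K_{[n]})=C(K)$; I would make explicit that the cup product here is the one induced on $C_K$ by the $*$-product / componentwise multiplication from Theorem 6.1 of \cite{BBCG4} together with Cai's chain-level formulas from \cite{cai2017}.

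Next I would split into the two vanishing cases. If $j\in I$, then in coordinate $j$ we are multiplying the $t_j$-factor of $a_{\{i\}}$ (or, if $i=j$, this case is subsumed by $i-j\not\equiv\pm1$) against the $s_j$-factor of $b_{\{j\}}$, and $t_j\smile s_j=0$ kills the product; I would also note that if $i=j$ one gets $s_i\smile s_i=0$. If instead $j\notin I$ but $i-j\not\equiv\pm1\pmod n$, then $\{i,j\}$ is not an edge of $K$ (since $K$ is the boundary $n$-gon, whose edges are exactly $\{k,k+1\}$ and $\{1,n\}$), so even though the factorwise product is nonzero the resulting monomial $y_\sigma$ has $\sigma=\{i,j\}\notin K$ and hence is the zero element of the chain complex $C(K)$ — this is the point where the combinatorics of the polygon enters, and it is the step I expect to need the most care in phrasing, because it is a statement about the support of the chain complex rather than a factorwise cancellation.

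Finally, in the surviving case $j\notin I$ and $i-j\equiv\pm1\pmod n$, so $\{i,j\}\in K$: all coordinates $k\notin I\cup J$ contribute $1\smile1=1$; coordinate $i$ gives $s_i\smile 1=s_i$ (the $i$-th factor of $b_{\{j\}}$ is $1$ since $i\notin J$ as $i\in I$... here I would double-check that $i\notin J$, or more carefully track that $b$'s $i$-th factor is $t_i$ or $1$ and multiplies $s_i$ correctly to give $s_i$); coordinate $j$ gives $1\smile s_j=s_j$; and every other coordinate $k$ in $(I\cup J)\setminus\{i,j\}$ contributes $t_k\smile t_k=t_k$ or $t_k\smile 1=t_k$ or $1\smile t_k=t_k$, in all cases $t_k$. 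The underlying monomial is therefore exactly the generator of $C(K)$ with $s$'s in positions $i,j$ and $t$'s elsewhere in $I\cup J$, i.e. $y_{\{i,j\}}$ up to sign. To pin down the sign I would count the Koszul signs: reordering the tensor factors of the product into the standard order requires moving the single $s_j$ (degree $1$) in $b_{\{j\}}$ past the factors of $a_{\{i\}}$ sitting in coordinates between $\min(i,j)$ and $\max(i,j)$ — but the only one of those that is itself an $s$ is $s_i$, so the total sign is $+1$ when $i<j$ (the $s_j$ from $b$ already sits to the right of $s_i$ from $a$) and $-1$ when $j<i$ (one transposition of $s_i$ past $s_j$), matching $y_{\{i,j\}}$ for $j>i$ and $-y_{\{j,i\}}$ for $j<i$. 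The main obstacle is bookkeeping the signs consistently with the convention fixed in Definition \ref{chain}; I would reduce risk by checking the formula against a small explicit case (e.g. $n=3$, $i=1$, $j=2$) before writing the general argument.
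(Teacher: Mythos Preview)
Your proposal is correct and follows essentially the same route as the paper: a factor-by-factor computation using the rules $t\smile s=0$, $s\smile t=s$, $t\smile t=t$, together with the observation that $\{i,j\}\notin K$ forces the product to vanish, and a Koszul sign count that picks up a $-1$ exactly when $j<i$. The paper simply orders the case split differently (it disposes of the non-edge case first, then treats $j\in I$, then computes the two surviving cases $j=i\pm1$), and is slightly less explicit than you are about why the non-edge case vanishes and about the possibility that $i\in J$ (which, as you note, is harmless since $s_i\smile t_i=s_i\smile 1=s_i$).
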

  \begin{proof}
	If $|i-j|\neq1$, then $\{i,j\}$ is not a simplex in $K$ and $a_{\{i\}} \smile b_{\{j\}}=0$. Therefore, we will now consider cases where $|i-j|=1$.
	
	Recall that  $s\smile t=s\smile 1 = s$ and $t\smile s=0$.
	
  Suppose $j \in I$. Since $j\in I$, $a_j=t_j$. In particular, in the $j$th coordinate of $a_{\{i\}} \smile b_{\{j\}}$, we will have $ a_{j} \smile b_{j}= t_{j} \smile s_{j} =0$  so $a_{\{i\}} \smile b_{\{j\}}=0$.

  Next suppose $j \notin I$.  Then $a_j=1$ and $a_j \smile b_j =s_j$. If $j=i+1$, then
  
  \noindent
  $a_{\{i\}} \smile b_{\{j\}}$

\noindent  
  $\begin{array}{cccccccc}
 =& (a_{1}\smile b_{1}) &\otimes \ldots \otimes & (a_{i} \smile b_{i}) & \otimes & (a_{j} \smile b_{j}) &\otimes \ldots \otimes & (a_{n}\smile b_{n}) \\
=&t_1 &\otimes \ldots \otimes & s_{i} \smile b_i & \otimes & 1 \smile s_{j} &\otimes \ldots \otimes & t_n  \\
=& t_1 & \otimes \ldots \otimes & s_{i} & \otimes & s_{j} & \otimes \ldots \otimes & t_n \\
=& y_{\{i,j\}}
  \end{array}$
  
  \noindent
  since the only coordinate of $b_{\{j\}}$ that is an $s$ is $b_j$ and all other coordinates are $t$ or $1$.
  
    If  $j=i-1$, since $a_i \smile b_{i-1} = (-1)^{|b_{i-i}||a_i|}( b_{i-1} \smile a_i)$, we have
    
    \noindent  
    $a_{\{i\}} \smile b_{\{j\}}$

\noindent  
  $\begin{array}{cccccccc}
=& (a_{1}\smile b_{1}) &\otimes \ldots \otimes &(-1)^{|b_j||a_i|} (a_{j} \smile b_{j}) & \otimes & (a_{i} \smile b_{i}) &\otimes \ldots \otimes & (a_{n}\smile b_{n}) \\
=&t_1 &\otimes \ldots \otimes & (-1)(1\smile s_{j} )& \otimes &   s_{i} \smile b_i &\otimes \ldots \otimes & t_n  \\
=& t_1 & \otimes \ldots \otimes & (-1) s_{j} & \otimes & s_{i}& \otimes \ldots \otimes & t_n \\
=& -y_{\{ j,i \} }
  \end{array}$
  \end{proof}
	The following theorem uses the previous lemmas to show what the only non-trivial products in $H^*(Z_K(D^1,S^0))$ are.
	 \begin{thm}\label{thethm} Let $K$ be the boundary of an $n$-gon.  Let $I=I_1 \sqcup I_2 \sqcup \ldots \sqcup I_p$ and $J=J_1 \sqcup J_2 \sqcup \ldots \sqcup J_q$ be nonempty subsets of $[n]$ such that $I\cup J =[n]$ and ${K}_I\simeq \bigvee_1^{p-1} S^0$ and  ${K}_J \simeq \bigvee_1^{q-1} S^0 $. Given generators $\alpha$ and $\beta$ of $H^*(Z_K(D^1,S^0))$ such that one is associated to some $I_g$ for $1\leq g \leq p$ and the other is associated to some $J_h$ for some  $1 \leq h \leq q$. If $\gamma$ is the second degree generator of $H^*(Z_K(D^1,S^0))$, then $\alpha \smile \beta=\pm \gamma$ if and only if the following conditions are met
\begin{itemize}
\item $I_g \nsubseteq J_h$ 
\item $J_h \nsubseteq  I_g$
\item ${K}_{I_g \cup J_h}$ is contractible
\end{itemize}
  \end{thm}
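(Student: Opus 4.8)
The plan is to reduce everything to the combinatorics of the $n$-gon boundary, using Corollary~\ref{cor:rank} and Lemma~\ref{lem:product} together with the chain-level cup product recorded after Definition~\ref{chain}. First I would fix representatives: by Corollary~\ref{cor:rank}, after choosing a basis we may take $\alpha = \sum_{i \in I_g} a_{\{i\}}$ for some connected component $I_g$ of $\mathcal{K}_I$, and similarly $\beta = \sum_{j \in J_h} b_{\{j\}}$, where we write $a$'s for generators in $C(K_I)$ and $b$'s in $C(K_J)$ as in Lemma~\ref{lem:product}. Then $\alpha \smile \beta = \sum_{i \in I_g} \sum_{j \in J_h} a_{\{i\}} \smile b_{\{j\}}$, and by Lemma~\ref{lem:product} the only surviving terms come from pairs $(i,j)$ with $j \notin I$ and $|i-j| \equiv \pm 1 \pmod n$, contributing $\pm y_{\{i,j\}}$ — that is, an edge of $K$ lying ``between'' the components $I_g$ and $J_h$. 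Since $\gamma$, the degree-two generator, is represented (up to sign) by any single $y_{\{e\}}$ for $e$ an edge of $K$ (because $H^2$ has rank one and is computed from the chain $C_K$ on the top class $\Sigma \mathcal{K}$), the product $\alpha \smile \beta$ is $\pm \gamma$ precisely when, after cancellation, an odd number — in fact exactly one, once signs are tracked — of such ``bridging edges'' survives.

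Next I would analyze when such bridging edges occur. Because $I \cup J = [n]$, every vertex of the $n$-cycle lies in $I$ or $J$ (or both). An edge $\{i,j\}$ of $K$ contributes to $\alpha \smile \beta$ only if $i \in I_g$, $j \in J_h$, and $j \notin I$; geometrically this says the component $I_g$ and the component $J_h$ are ``adjacent along the cycle'' with their shared boundary edge having its $J$-endpoint outside $I$. I would argue: if $I_g \subseteq J_h$ then every vertex of $I_g$ is in $J$, so no bridging edge has its $J$-endpoint outside $I$ in the required way — more carefully, $\mathcal{K}_{I_g \cup J_h} = \mathcal{K}_{J_h}$ which is a path (connected, contractible) but the containment kills the product by the $t \smile s = 0$ relation along the overlap; symmetrically if $J_h \subseteq I_g$. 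And if $\mathcal{K}_{I_g \cup J_h}$ is not contractible, then since it is a full subcomplex of the $n$-cycle it must be disconnected (it cannot be the whole cycle unless $I_g \cup J_h = [n]$, a case to handle separately by a direct count showing the two bridging edges cancel in $H^2$), hence $I_g$ and $J_h$ are not adjacent and there is no bridging edge at all, so the product vanishes. Conversely, if all three conditions hold, $\mathcal{K}_{I_g \cup J_h}$ is a proper contractible full subcomplex, i.e.\ an arc of the cycle, strictly containing both $I_g$ and $J_h$; walking along this arc from $I_g$ to $J_h$ one crosses exactly one edge $\{i,j\}$ with $i \in I_g$, $j \in J_h \setminus I$, which is the unique surviving term, giving $\alpha \smile \beta = \pm y_{\{i,j\}} = \pm \gamma$.

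The main obstacle I anticipate is the bookkeeping of signs and of which vertices genuinely survive the cancellation in $\alpha \smile \beta$ — the expansion $\sum_i \sum_j a_{\{i\}} \smile b_{\{j\}}$ can a priori contain several nonzero $\pm y_{\{i,j\}}$ terms (one at each ``interface'' between $I_g$ and $J_h$ along the cycle), and I must show that under the stated hypotheses there is exactly one interface, while in the boundary case $I_g \cup J_h = [n]$ there are two interfaces whose contributions cancel in $H^2$ (using that $\sum_{\text{all edges } e} \pm y_{\{e\}}$ or an appropriate coboundary relation trivializes a suitable combination). Handling the overlap $I_g \cap J_h \neq \emptyset$ also needs care: where a vertex lies in both $I$ and $J$, the relevant factor is $t \smile s = 0$, which is exactly what forces the product to vanish in the containment cases, and I would isolate this as a small lemma. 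Once the ``exactly one interface'' count is established, matching it against the three bulleted conditions is a direct case check on arcs of the $n$-cycle, and the equivalence follows.
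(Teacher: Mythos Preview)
Your proposal is correct and follows essentially the same route as the paper: represent $\alpha$ and $\beta$ via Lemma~\ref{first} and Corollary~\ref{cor:rank}, expand $\alpha \smile \beta$ using Lemma~\ref{lem:product}, and count the surviving edge terms $\pm y_{\{i,j\}}$ by a case analysis on how the arcs $I_g$ and $J_h$ sit along the cycle. The one simplification you are missing is that the paper exploits the freedom in Corollary~\ref{cor:rank} to choose the basis so that $1,\,n \notin I_g \cup J_h$ (possible because if $1,n\in I$ they lie in the same component, which can be the one omitted from the basis); this eliminates the wraparound edge $\{1,n\}$ and the boundary case $I_g \cup J_h = [n]$ that you flag as an obstacle, so no ``two interfaces cancel'' argument is needed and the remaining case analysis is purely linear.
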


\begin{proof}
We will compute $H^*({Z}_K(D^1,S^0))$ using the chain complex described previously. 
  $$ \begin{array}{ccc}
    d(y_\emptyset) &=& y_{\{1\}} + \ldots + y_{\{n\}}\\
    d(y_{\{1\}})&=& -y_{\{1,2\}}-y_{\{1,n\}}                \\
    d(y_{\{2\}})&=& y_{\{1,2\}}-y_{\{2,3\}} \\
    d(y_{\{3\}})&=&y_{\{2,3\}}-y_{\{3,4\}} \\
    \hdots & & \\
    d(y_{\{n-1\}})&=&y_{\{n-2,n-1\}} - y_{\{n-1,n\}} \\
    d(y_{\{n\}})&=& y_{\{n-1,n\}}+y_{\{1,n\}}
  \end{array}$$
 Therefore, all the classes in $H^*({Z}(K;(D^1,S^0)))$  represented by an edge are cohomologous, except $y_{\{1,n\}}$, which is the negative. Note that if $1$ and $n$ are in $I$, then $1$ and $n$ are in $I_i$ for some $1 \leq i \leq p$ (since $\mathcal{K}_I\simeq \bigvee_1^{p-1} S^0$, it cannot be that $1$ and $n$ are in different subsets of $I$). By Corollary \ref{cor:rank}, we only need to consider all but one of the disjoint subset of $I$ and all but one of the disjoint subsets of $J$. Therefore, it suffices to only consider when $1, n \notin I_g \cup J_h$. As a consequence, the class cannot $y_{\{1,n\}}$ occur in the product $\alpha \smile \beta$.

Let $I_g=\{i_1, \ldots, i_c \}$ and $J_h=\{j_1, \ldots, j_d \}$. By Lemma \ref{first}, we have that $$\alpha = \sum_{i\in I_g} a_{\{i\}}$$

Since $I\cup J = [n]$, we have that $i_1 \neq j_1$ and $i_c \neq j_d$. 

First, suppose $I_g \cap J_h=\emptyset$. In the case that $i_c < j_1$, we must have $i_c=j_1-1$ so that there is at least one edge after expanding the product. Then there is only one nonzero term $$\alpha \smile \beta =\sum_{i\in I_g,j\in J_h} a_{\{i\}} \smile b_{\{j\}}=y_{\{i_c,j_1\}}$$ by Lemma \ref{lem:product}.
Similarly, if $j_d = i_1-1$, then $$\alpha \smile \beta =\sum_{i\in I_g,j\in J_h} a_{\{i\}} \smile b_{\{j\}}=a_{\{i_1\}}\smile b_{\{j_d\}}=-y_{\{j_d,i_1\}}$$

Secondly, suppose $I_g \cap J_h \neq \emptyset$ and that neither set is contained in the other. If $j_1 \leq i_c$, then there exists $j\in J_h $ such that $j=i_c$. Note that $j+1\in J_h$ (because $i_c \neq j_d$ and so $j\neq j_d$). Since $\{i_c, j+1\}$ is an edge and $j+1=i_c+1\notin I$, by Lemma \ref{lem:product} we have only one nonzero term $$\alpha \smile \beta = a_{\{i_c\}} \smile b_{\{j+1\}}= y_{\{i_c, i_c+1\}}$$
Similarly, if $i_1 \leq j_d$ and $j=i_1$ for some $j \in J_h$, then $j-1\notin I$ and $i_1-(j-1)=1$. Then $$ \alpha \smile \beta =a_{\{i_1\}}\smile b_{\{j-1\}}= -y_{\{i_1-1,i_1\}}$$

If $J_h \subset I_g$, then $\alpha \smile \beta=0$ by Lemma \ref{lem:product}. If $I_g \subset J_h$, then there are only two possible nonzero products between the summands of $\alpha$ and $\beta$. There exists $j\in J_h$ such that $j=i_1$. Then
$$\begin{array}{ccc} \alpha \smile \beta &=& a_{\{i_1\}}b_{\{j-1\}}+a_{\{i_c\}}b_{\{j+c\}}\\
&=&-y_{\{i_1-1,i_1\}} + y_{\{i_c,i_c+1\}} \\
&=& -\gamma+\gamma \\
&=& 0
\end{array}$$
\end{proof}

\subsection{Example and related consequences}\label{sec:consequences}

To illustrate an application of the theorem and some important consequences, we will consider the case when $K$ is the boundary of the pentagon, denoted $K_5$. Let the $1$-simplices of $K_5$ be labeled $ \{1,2\},\{2,3\},\{3,4\},\{4,5\},\{1,5\}$.

It follows from \cite{MR1104531} that the real moment-angle complex over the boundary of an $n$-gon is a closed orientable surface of genus $1+(n-4)2^{n-3}$, which means that in this example the associated real moment-angle complex has genus five. For the combinatorial generators, there are ten subsets $I$ of $[5]$ that yield a full subcomplex $K_I$ equivalent to a wedge of $0$-spheres.  The cohomology of $Z_{K_I}(D^1,S^0)$ has an identity, ten degree one generators $x_0,\ldots,x_4,w_0,\ldots,w_4$, and a degree two generator $z$, subject to a graded commutative product. The identity corresponds to the empty set. The generators $x_i$ correspond to the subsets that yield a full subcomplex of $K$ of an edge and the opposite vertex, such as the subset $I=\{1,2,4\}$. The generators $w_i$ correspond to the subsets that produce a full subcomplex of two disjoint vertices. Lastly, $z$ corresponds to the full vertex set $[5]=\{1,2,3,4,5\}$. $$H^*(Z_{K_5}(D^1,S^0))=\langle 1,w_0,\ldots,w_4,x_0,\ldots,x_4,z \: | \: x_i x_j= z \delta_{j,i+1}, x_i w_j=z \delta_{i,j} \rangle$$ where $\delta$ is the Dirac delta function and the subscripts $i, j$ are integers modulo $4$.

Notice that for subsets $J=\{2,4,5\}$ and $I=\{1,3,4\},$  $ \alpha_I \smile \alpha_J=\gamma$. This is an example where generators coming from non-disjoint subsets have a nontrivial product, unlike the ring for moment-angle complexes. The cohomology ring of $Z_K(D^1,S^0)$ is not isomorphic to the Tor-module as rings. Moreover, this application of Theorem \ref{thethm} also shows that the basis of combinatorial generators is not symplectic.

 Lastly, recall that the multiplicative structure of the cohomology of real moment-angle complexes plays an important role in  the product structure for more general polyhedral product spaces \cite{BBCGcup}. Theorem \ref{generalringPP} gives the algebra $H^*
(Z_K(\underline{CA},\underline{A}))$ in terms of the cohomology algebras of $A_i$ and $H^*
(Z_{K_I}(D^1,S^0))$.

\bibliographystyle{amsplain}
\bibliography{mybibliography}
\end{document}